\def\th@plain{%
  \upshape 
}
\renewenvironment{proof}[1][\proofname]{\par
  \pushQED{\qed}%
  \normalfont \topsep6\p@\@plus6\p@\relax
  \trivlist
  \item[\hskip\labelsep
        \bfseries
    #1\@addpunct{.}]\ignorespaces
}{%
  \popQED\endtrivlist\@endpefalse
}
\newtheorem{theorem}{Theorem}[section]
\newtheorem{lemma}{Lemma}[section]
\newtheorem{corollary}[theorem]{Corollary}
\newtheorem*{conjecture*}{Conjecture}
\theoremstyle{definition}
\newtheorem{definition}{Definition}
\newtheorem{remark}{Remark}
\newcommand{\etal}{et~al.\ }
\def\mad(#1){\mathrm{mad}(#1)}
\def\int(#1){\mathrm{int}(#1)}
\def\ext(#1){\mathrm{ext}(#1)}
\def\Int(#1){\mathrm{Int}(#1)}
\def\Ext(#1){\mathrm{Ext}(#1)}
\title{DP-4-coloring of planar graphs with some restrictions on cycles}
\author{Rui Li \qquad Tao Wang\footnote{\tt Corresponding
author: wangtao@henu.edu.cn} \\
{\small Institute of Applied Mathematics}\\
{\small Henan University, Kaifeng, 475004, P. R. China}}
\begin{document}
\date{}
\maketitle
\begin{abstract}
DP-coloring was introduced by Dvo\v{r}\'{a}k and Postle as a generalization of list coloring. It was originally used to solve a longstanding conjecture by Borodin, stating that every planar graph without cycles of lengths 4 to 8 is 3-choosable. In this paper, we give three sufficient conditions for a planar graph to be DP-4-colorable. Actually all the results (\autoref{MRA}, \ref{MRB} and \ref{MRC}) are stated in the ``color extendability'' form, and uniformly proved by vertex identification and discharging method. 
\end{abstract}

\section{Introduction}\label{sec1}
A {\bf list assignment} is a function $L$ which assigns a list $L(v)$ of colors to each vertex $v$. An {\bf $L$-coloring} of $G$ is a proper coloring $\phi$ of $G$ such that $\phi(v) \in L(v)$ for all $v \in V(G)$. A {\bf list $k$-assignment} is a list assignment $L$ with $|L(v)| \geq k$ for each vertex $v \in V(G)$. A graph $G$ is {\bf $k$-choosable} if it admits an $L$-coloring whenever $L$ is a list $k$-assignment. The {\bf list chromatic number} $\chi_{\ell}(G)$ is the least integer $k$ such that $G$ is $k$-choosable. List coloring is a generalization of ordinary proper coloring, thus we have that $\chi(G) \leq \chi_{\ell}(G)$. The Four Color Theorem states that every planar graph is $4$-colorable. It was proved that not every planar graph is $4$-choosable, see \cite{MR1386951, MR1431668}. Some sufficient conditions for planar graphs to be 4-choosable are well studied, see \cite{MR3648207,MR3638001,MR2586624,MR1687318,MR1852816,MR2538645}. 

Correspondence coloring is introduced by Dvo\v{r}\'{a}k and Postle \cite{MR3758240} as a generalization of list coloring, now it is also called DP-coloring. 
\begin{definition}\label{DEF1}
Let $G$ be a simple graph and $L$ be a list assignment for $G$. For each vertex $v \in V(G)$, let $L_{v} = \{v\} \times L(v)$; for each edge $uv \in E(G)$, let $\mathscr{M}_{uv}$ be a matching between the sets $L_{u}$ and $L_{v}$, and let $\mathscr{M} = \bigcup_{uv \in E(G)}\mathscr{M}_{uv}$, call it the {\bf matching assignment}. The matching assignment is called a {\bf $k$-matching assignment} if $L(v) = [k]$ for each $v \in V(G)$. A {\bf cover} of $G$ is a graph $H_{L, \mathscr{M}}$ (simply write $H$) satisfying the following two conditions: 
\begin{enumerate}[label =(C\arabic*)]
\item the vertex set of $H$ is the disjoint union of $L_{v}$ for all $v \in V(G)$; and
\item the edge set of $H$ is the matching assignment $\mathscr{M}$.
\end{enumerate}
\end{definition}
Note that the matching $\mathscr{M}_{uv}$ is not required to be a perfect matching between the sets $L_{u}$ and $L_{v}$, and possibly it is empty. It is easy to see that the induced subgraph $H[L_{v}]$ is an independent set for each vertex $v \in V(G)$. 

\begin{definition}
Let $G$ be a simple graph and $H$ be a cover of $G$. An {\bf $\mathscr{M}$-coloring} of $H$ is an independent set $\mathcal{I}$ in $H$ such that $|\mathcal{I} \cap L_{v}| = 1$ for each vertex $v \in V(G)$. The graph $G$ is {\bf DP-$k$-colorable} if for any list assignment $L(v) \supseteq [k]$ and any matching assignment $\mathscr{M}$, it has an $\mathscr{M}$-coloring. The {\bf DP-chromatic number $\chi_{\mathrm{DP}}(G)$} of $G$ is the least integer $k$ such that $G$ is DP-$k$-colorable. 
\end{definition}

For DP-3-colorable graphs, we refer the reader to \cite{MR3886261, MR3969021, MR4230514, MR3954054}; for ``weakly'' DP-3-colorable graphs, we refer the reader to \cite{MR3758240, MR3983123, Lu}. Dvo\v{r}\'{a}k and Postle \cite{MR3758240} observed that every planar graph is DP-5-colorable. In this paper, we concentrate on DP-4-coloring of planar graphs. 

Two cycles (or faces) are {\bf adjacent} if they have at least one common edge. Two cycles are {\bf intersecting} if they share at least one vertex. Liu and Li \cite{MR3881665} showed that every planar graph without 4-cycles adjacent to two triangles is DP-4-colorable. Kim and Ozeki \cite{MR3802151} showed that for each integer $k \in \{3, 4, 5, 6\}$, every planar graph without $k$-cycles is DP-4-colorable. Chen \etal \cite{MR3996735} showed that every planar graph without 4-cycles adjacent to 6-cycles is DP-4-colorable. Zhang and Li \cite{Zhang2019} showed that every planar graph without 5-cycles adjacent to 6-cycles is DP-4-colorable. Lv, Zhang and Li \cite{Lv2019} showed that every planar graph without intersecting 5-cycles is DP-4-colorable. 

Li and Wang \cite{Li2019} considered planar graphs without some mutually adjacent 3-, 4-, and 5-cycles. 
\begin{theorem}[Li and Wang \cite{Li2019}]\label{MRTHREE}
Every planar graph without subgraphs isomorphic to the configurations in \autoref{E} is DP-4-colorable. 
\end{theorem}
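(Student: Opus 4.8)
The plan is to argue by contradiction using a minimal counterexample together with the discharging method, following the ``vertex identification'' paradigm advertised in the abstract. Suppose the statement fails, and let $G$ be a counterexample with the fewest vertices: a plane graph containing none of the configurations of \autoref{E}, together with a list assignment $L$ with $|L(v)| \ge 4$ and a matching assignment $\mathscr{M}$ for which the cover $H_{L, \mathscr{M}}$ admits no $\mathscr{M}$-coloring. By minimality every proper subgraph of $G$ is DP-$4$-colorable, and one checks immediately that $G$ is connected with minimum degree at least $4$: if $\deg(v) \le 3$, delete $v$, color $G - v$ by minimality, and extend to $v$, which loses at most $3$ colors through its matchings and hence retains an available color in $L(v)$.

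First I would isolate the reducible configurations. The heart of the argument is a collection of structural lemmas asserting that certain local patterns — low-degree vertices surrounded by triangles, adjacent short faces of prescribed types, and so on — cannot occur in $G$. The DP-specific tool here is vertex identification: given two suitable vertices $x, y$ (typically a pair at distance $2$ across a triangular region, with no troublesome common neighbor), one forms the smaller plane graph $G'$ obtained by identifying $x$ and $y$, transporting the matchings so that the matching of the identified vertex with each neighbor $w$ composes the original constraints $\mathscr{M}_{xw}$ and $\mathscr{M}_{yw}$. Provided the identification keeps the graph planar and introduces none of the forbidden configurations of \autoref{E}, minimality yields an $\mathscr{M}$-coloring of $G'$; reading it back assigns $x$ and $y$ ``matched'' colors and frees enough room to color the constrained vertices of the configuration, a contradiction. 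Verifying that each identification is legitimate — that planarity survives, that no new mutually adjacent $3$-, $4$-, or $5$-cycle configuration from \autoref{E} is created, and that the composed edges still form a matching — is exactly what the absence of the configurations in \autoref{E} is designed to guarantee.

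With the reducible configurations in hand, I would run a discharging argument. Assign to each vertex and each face $x$ the initial charge $\mu(x) = \deg(x) - 4$; Euler's formula gives $\sum_{v \in V(G)} (\deg(v) - 4) + \sum_{f \in F(G)} (\deg(f) - 4) = -8$, so the total charge is negative. I would then design redistribution rules sending charge from high-degree vertices and large faces toward the $3$-faces (and, where needed, toward $5$-faces incident to triangles) that carry the only negative initial charge, calibrated so that each recipient is compensated exactly once. Using minimum degree $4$ together with the structural lemmas to bound how many needy faces each donor must support, one shows that the final charge $\mu^{*}(x)$ is nonnegative for every vertex and face, contradicting the negative total and completing the proof.

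The step I expect to be the main obstacle is the reducibility via vertex identification rather than the discharging. Ensuring that identifying $x$ and $y$ neither destroys planarity nor creates one of the delicate configurations of \autoref{E} requires a careful case analysis of the local neighborhood, and the bookkeeping for composing $\mathscr{M}_{xw}$ and $\mathscr{M}_{yw}$ into a single valid matching — so that the lifted coloring is genuinely independent in $H$ — is the subtle point that distinguishes this DP-coloring argument from the corresponding list-coloring one. Once these reductions are secured, the discharging is a careful but essentially mechanical accounting.
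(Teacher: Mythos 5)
First, a point of reference: the statement you are proving is \autoref{MRTHREE}, which this paper does not prove at all --- it is quoted from Li and Wang \cite{Li2019} as background. So there is no in-paper proof to compare against; the closest analogues are the proofs of \autoref{MRA}--\ref{MRC}, which do follow the minimal-counterexample / vertex-identification / discharging template you describe.

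That said, your submission is a strategy outline, not a proof, and the gap is essentially the entire mathematical content. You never specify which local configurations are reducible (you say ``low-degree vertices surrounded by triangles, adjacent short faces of prescribed types, and so on''), you never state or verify a single identification lemma, you never write down the discharging rules, and you never check that the final charges are nonnegative. In arguments of this kind the theorem lives entirely in those details: the forbidden configurations of \autoref{E} are tuned to make a \emph{specific} list of reductions and a \emph{specific} set of rules work, and asserting that suitable ones exist is just restating the theorem. Two further technical points would need repair even in a fleshed-out version. (i) Your description of the identification transports the constraints by ``composing'' $\mathscr{M}_{xw}$ and $\mathscr{M}_{yw}$; the mechanism actually used in this line of work (see \autoref{FOUR1-MR1} and \autoref{FOUR2-MR1}) is different: one first straightens the edges at the central vertex via \autoref{ST}, deletes the central vertex, identifies two of its neighbors, and restricts $\mathscr{M}$ to the new edge set --- and one must prove (via the absence of short separating cycles and of certain paths) that the identified vertices have no common neighbors, no short connecting path, and create no loops, multiple edges, or forbidden configurations. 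None of that case analysis appears. (ii) Your charge assignment $\mu(x)=\deg(x)-4$ with total $-8$ is a legitimate starting point, but since $5$-faces also carry zero charge under it while still potentially needing to receive charge (as happens in the proofs of \autoref{MRA}--\ref{MRC}, where the $(4,4,4,4,4^{+})$-face surrounded by triangles is the delicate case), the claim that the rules ``can be calibrated'' is exactly the part that requires proof. As it stands, the proposal identifies the right general framework but proves nothing.
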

\begin{figure}[htbp]%
\centering
\subcaptionbox{\label{fig:subfig:a--}}{\begin{tikzpicture}
\coordinate (A) at (45:1);
\coordinate (B) at (135:1);
\coordinate (C) at (225:1);
\coordinate (D) at (-45:1);
\coordinate (H) at (90:1.414);
\draw (A)--(H)--(B)--(C)--(D)--cycle;
\draw (A)--(B);
\fill (A) circle (2pt)
(B) circle (2pt)
(C) circle (2pt)
(D) circle (2pt)
(H) circle (2pt);
\end{tikzpicture}}%
\hspace{1.5cm}
\subcaptionbox{\label{fig:subfig:b--}}{\begin{tikzpicture}
\coordinate (A) at (30:1);
\coordinate (B) at (150:1);
\coordinate (C) at (225:1);
\coordinate (D) at (-45:1);
\coordinate (H) at (90:1.414);
\coordinate (X) at (60:1.4);
\coordinate (Y) at (120:1.4);
\coordinate (T) at ($(H)!(A)!(X)$);
\coordinate (Z) at ($(A)!2!(T)$);
\draw (A)--(X)--(Z)--(H)--(Y)--(B)--(C)--(D)--cycle;
\draw (A)--(H)--(B);
\draw (H)--(X);
\fill (A) circle (2pt)
(B) circle (2pt)
(C) circle (2pt)
(D) circle (2pt)
(H) circle (2pt)
(X) circle (2pt)
(Y) circle (2pt)
(Z) circle (2pt);
\end{tikzpicture}}%
\hspace{1.5cm}
\subcaptionbox{\label{fig:subfig:c--}}{\begin{tikzpicture}
\coordinate (A) at (30:1);
\coordinate (B) at (150:1);
\coordinate (C) at (225:1);
\coordinate (D) at (-45:1);
\coordinate (H) at (90:1.414);
\coordinate (X) at (60:1.4);
\coordinate (Y) at (120:1.4);
\coordinate (T) at ($(A)!(H)!(X)$);
\coordinate (Z) at ($(H)!2!(T)$);
\draw (A)--(Z)--(X)--(H)--(Y)--(B)--(C)--(D)--cycle;
\draw (X)--(A)--(H)--(B);
\fill (A) circle (2pt)
(B) circle (2pt)
(C) circle (2pt)
(D) circle (2pt)
(H) circle (2pt)
(X) circle (2pt)
(Y) circle (2pt)
(Z) circle (2pt);
\end{tikzpicture}}
\caption{Forbidden configurations in \autoref{MRTHREE}.}
\label{E}
\end{figure}
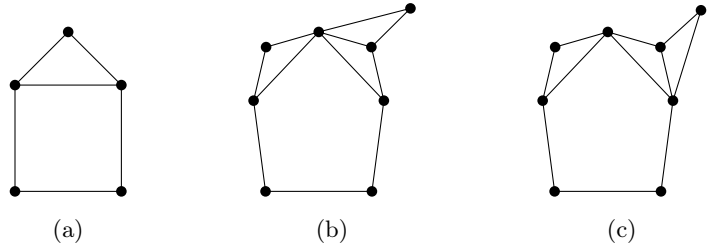

An edge $uv$ in $G$ is {\bf straight} in a $k$-matching assignment $\mathscr{M}$ if $(u, c_{1})(v, c_{2}) \in \mathscr{M}_{uv}$ satisfies $c_{1} = c_{2}$. An edge $uv$ in $G$ is {\bf full} in a $k$-matching assignment $\mathscr{M}$ if $\mathscr{M}_{uv}$ is a perfect matching. Let $W = w_{1}w_{2}\dots w_{m}$ with $w_{m} = w_{1}$ be a closed walk of length $m$ in $G$, a matching assignment is {\bf inconsistent} on $W$, if there exists $c_{i} \in L(w_{i})$ for $i \in [m]$ such that $(w_{i}, c_{i})(w_{i+1}, c_{i+1})$ is an edge in $\mathscr{M}_{w_{i}w_{i+1}}$ for $i \in [m-1]$ and $c_{1} \neq c_{m}$. Otherwise, the matching assignment is {\bf consistent} on $W$. 

\begin{lemma}[Dvo\v{r}\'{a}k and Postle \cite{MR3758240}]\label{ST}
Let $G$ be a graph with a $k$-matching assignment $\mathscr{M}$, and let $H$ be a subgraph of $G$. If for every cycle $\mathcal{Q}$ in $H$, the assignment $\mathscr{M}$ is consistent on $\mathcal{Q}$ and all edges of $\mathcal{Q}$ are full, then we may rename $L(u)$ for $u \in V(H)$ to obtain a $k$-matching assignment $\mathscr{M}'$ for $G$ such that all edges of $H$ are straight in $\mathscr{M}'$. 
\end{lemma}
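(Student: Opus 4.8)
The plan is to realize each ``renaming'' of $L(u)$ as a permutation $\rho_u$ of $[k]$, reducing the statement to a flat-connection (trivial holonomy) argument on $H$. Under a renaming, a matched pair $(u,c_{1})(v,c_{2})\in\mathscr{M}_{uv}$ becomes the pair $(u,\rho_{u}(c_{1}))(v,\rho_{v}(c_{2}))$ in the new assignment $\mathscr{M}'$, so the edge $uv$ is straight in $\mathscr{M}'$ exactly when $\rho_{u}(c_{1})=\rho_{v}(c_{2})$ for every matched pair. Since renaming is merely a bijective relabeling of the size-$k$ color set at each vertex of $H$ and leaves every vertex and every matching outside $H$ untouched, any choice of permutations $(\rho_{u})_{u\in V(H)}$ produces a legitimate $k$-matching assignment $\mathscr{M}'$ for $G$; the whole task is to choose them so that all edges of $H$ become straight. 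I would treat each connected component of $H$ separately.

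First I would record the two ways an edge of $H$ can behave. By hypothesis every edge lying on a cycle of $H$ is full, so each non-full edge of $H$ is a bridge. For a full edge $uv$ the matching $\mathscr{M}_{uv}$ is perfect and hence defines a bijection $\tau_{uv}\colon[k]\to[k]$ with $c_{2}=\tau_{uv}(c_{1})$ and $\tau_{vu}=\tau_{uv}^{-1}$; in this notation straightness of $uv$ after renaming becomes the clean relation $\rho_{v}=\rho_{u}\circ\tau_{uv}^{-1}$. For a bridge the matching may be only partial, but then there is no cycle constraint to respect.

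Next I would build the permutations along a spanning tree $T$ of the component, rooted at some $r$, by setting $\rho_{r}=\mathrm{id}$ and processing the remaining vertices outward from the root. When a vertex $v$ is reached through its parent edge $uv$ (with $\rho_{u}$ already defined), I define $\rho_{v}$ so that $uv$ becomes straight: the matched colors at $v$ are distinct and must be sent to the distinct values prescribed by $\rho_{u}$, so this partial bijection always extends to a permutation $\rho_{v}$ of $[k]$ (for a full parent edge it is forced to $\rho_{u}\circ\tau_{uv}^{-1}$, for a bridge I extend arbitrarily). This makes every tree edge straight.

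It remains to verify the non-tree edges, and this is the step where the consistency hypothesis does the real work and which I expect to be the crux. Each non-tree edge $uv$ is full and closes a fundamental cycle $\mathcal{Q}$ with the tree path $u=x_{0},x_{1},\dots,x_{\ell}=v$; since $\mathcal{Q}$ is a cycle of $H$, all its edges are full and $\mathscr{M}$ is consistent on $\mathcal{Q}$. Consistency on a full cycle says precisely that the composite of the bijections $\tau$ around $\mathcal{Q}$ is the identity, i.e.\ the holonomy is trivial, so $\tau_{x_{\ell-1}x_{\ell}}\circ\cdots\circ\tau_{x_{0}x_{1}}=\tau_{uv}$. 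On the other hand, composing the straightness relations $\rho_{x_{j+1}}=\rho_{x_{j}}\circ\tau_{x_{j}x_{j+1}}^{-1}$ along the tree path gives $\rho_{v}=\rho_{u}\circ(\tau_{x_{\ell-1}x_{\ell}}\circ\cdots\circ\tau_{x_{0}x_{1}})^{-1}=\rho_{u}\circ\tau_{uv}^{-1}$, which is exactly the condition for $uv$ to be straight. Hence every edge of $H$ is straight in $\mathscr{M}'$, which completes the argument. The one point to state carefully is the bookkeeping of composition order and orientation in this holonomy computation, since the bridges (handled by the arbitrary extension above) must be seen never to enter it, which is automatic because they lie on no fundamental cycle.
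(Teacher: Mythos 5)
The paper states this lemma only as a citation to Dvo\v{r}\'{a}k and Postle and gives no proof of its own, so there is no internal proof to compare against; your argument is correct and is essentially the standard one from the original source: realize the renaming as permutations $\rho_u$, force them along a spanning tree of each component, and use consistency on the (necessarily full) fundamental cycles to check the non-tree edges. The bridge case, the extension of a partial bijection to a permutation, and the holonomy bookkeeping $\rho_v=\rho_u\circ\tau_{uv}^{-1}$ are all handled correctly.
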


A $d$-vertex, $d^{+}$-vertex or $d^{-}$-vertex is a vertex of degree $d$, at least $d$ or at most $d$ respectively. Similar definitions can be applied to faces and cycles. Liu and Li \cite{MR3881665} showed that every planar graph without a $4$-cycle adjacent to two triangles is DP-$4$-colorable. Actually, they showed the following stronger result. 

\begin{theorem}[Liu and Li \cite{MR3881665}]\label{LL}
Let $G$ be a planar graph without a $4$-cycle adjacent to two triangles, and let $\mathscr{M}$ be a $4$-matching assignment for $G$. If $S$ consists of exactly one vertex or all the vertices on a $6^{-}$-face of $G$, then every $\mathscr{M}$-coloring $\phi$ of $G[S]$ can be extended to an $\mathscr{M}$-coloring $\varphi$ of $G$. 
\end{theorem}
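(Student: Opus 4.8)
The plan is to argue by contradiction from a minimal counterexample. Suppose the theorem fails, and among all counterexamples $(G,\mathscr{M},S,\phi)$---where $G$ is planar with no $4$-cycle adjacent to two triangles, $\mathscr{M}$ is a $4$-matching assignment, $S$ is a single vertex or the vertex set of a $6^{-}$-face, and $\phi$ is an $\mathscr{M}$-coloring of $G[S]$ that does not extend to $G$---I would choose one minimizing $|V(G)|+|E(G)|$. I would fix a plane embedding in which the precolored object lies on the outer face $f_{0}$, so that the goal becomes ``extending $\phi$ inward.'' Since deleting a vertex or edge preserves both planarity and the hypothesis of having no $4$-cycle adjacent to two triangles, and leaves the precolored face intact, minimality is available for every proper subgraph that still contains $S$.

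First I would collect the reducible configurations obtained by deletion. If some vertex $v\notin S$ has $\deg(v)\le 3$, then in any $\mathscr{M}$-coloring of $G-v$ (which exists by minimality) the at most three matching edges at $v$ forbid at most three of the four colors in $L(v)$, so $v$ can be colored last; hence every vertex outside $S$ is a $4^{+}$-vertex. Arguments of this type also rule out small separators and pendant structure, so I may assume $G$ is $2$-connected and that every face is bounded by a cycle.

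The heart of the argument is a family of reducible configurations obtained by vertex identification, as flagged in the abstract. Given a suitable configuration---typically two vertices $u,w$ lying in a common small face or sharing neighbors in a controlled way---I would identify $u$ and $w$ into a single vertex to form a smaller graph $G'$, transfer $\mathscr{M}$ to a matching assignment on $G'$, invoke minimality to $\mathscr{M}$-color $G'$, and pull the coloring back to an $\mathscr{M}$-coloring of $G$ in which $u$ and $w$ receive ``matched'' colors. Two points require care: (i) the identification must not create a $4$-cycle adjacent to two triangles, so the choice of $u,w$ is dictated by the forbidden adjacency; and (ii) when $u$ and $w$ have a common neighbor, the two matching edges must be made compatible, which is precisely where \autoref{ST} enters---after checking that the relevant short closed walks are consistent and full, I can rename colors so that the edges in question are straight, legitimizing the identification. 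These reductions should forbid, in a minimal counterexample, the dense triangle/$4$-cycle clusters that the discharging step cannot afford.

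Finally I would run a discharging argument. Assign to each vertex and face $x$ the initial charge $\mu(x)=\deg(x)-4$; by Euler's formula the total charge is $-8$. I would then design discharging rules under which $f_{0}$ never sends out charge while every other face and every vertex ends with nonnegative charge, with $5^{+}$-faces and high-degree vertices feeding the $3$-faces (initial charge $-1$) and any deficient $4$-faces, and with the hypothesis of no $4$-cycle adjacent to two triangles bounding how many triangles a single face or vertex must support. Since $f_{0}$ is bounded by a cycle, its final charge is at least $-1$, so the grand total would be at least $-1$, contradicting the value $-8$. The main obstacle I anticipate is twofold: verifying that the vertex-identification reductions simultaneously respect the matching assignment (via \autoref{ST}) and preserve the no-$4$-cycle-adjacent-to-two-triangles condition, and then carrying out the local discharging analysis around $3$- and $4$-faces---where that same adjacency restriction is the only thing preventing triangles from clustering tightly enough to overwhelm the available charge---especially in the band of faces incident to the precolored face, which must be treated as a delicate boundary case.
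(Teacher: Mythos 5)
Your outline has the same architecture as the proof this paper gives for the stronger \autoref{MRA} (which contains the Liu--Li theorem as a special case): minimal counterexample, minimum degree four outside $S$ by deletion, reducibility of certain $4$-vertex configurations by identifying two neighbours after straightening the incident edges via \autoref{ST}, and a global discharging count. But as written it is a plan rather than a proof, and two of the omissions are not mere bookkeeping. First, your normalization $\mu(x)=\deg(x)-4$ gives every internal $4$-vertex charge $0$ and every $3$-face charge $-1$; since the hypothesis permits a triangle all of whose vertices are internal $4$-vertices, such a face has no local source of charge, and you supply neither a rule routing charge to it from farther away nor a reducibility argument eliminating it. The paper's normalization ($2\deg(v)-6$ on vertices, $\deg(f)-6$ on inner faces) is not cosmetic: it gives each internal $4$-vertex charge $2$, exactly enough to pay $1$ to each of its at most two incident triangles, and the genuinely hard residual cases---internal $(4,4,4,4,4^{+})$-faces surrounded by five triangles, and the ``special'' $3$-faces---are the ones handled by the identification lemmas (\autoref{FOUR1-MR1}, \autoref{FOUR2-MR1}) together with the source/sink rule. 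None of that content appears in your sketch, and it is where the theorem actually lives.

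Second, your identification step silently relies on the absence of short separating cycles: to identify $w_{2}$ and $w_{4}$ you must know they are at distance at least four or five in the deleted graph, since otherwise the identification creates loops, multiple edges, a chord of the precolored cycle, or a forbidden configuration. That distance bound is exactly what \autoref{L1-MR1}\ref{Le-MR1} (no separating $6^{-}$-cycle) delivers, and separating $6^{-}$-cycles are excluded not by a ``small separator'' deletion argument but by colouring the closed interior first and then re-applying the theorem to the exterior with the cycle as the new precolored object---which is also the reason the statement must be proved in the colour-extendability form for cycles rather than faces. Relatedly, deleting vertices does not in general ``leave the precolored face intact''; the paper first reduces to the case where $G[S]$ is an induced cycle taken as the outer face, and your induction needs the same setup before any appeal to minimality on $G-v$ or on an identified graph is legitimate.
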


The first result of this paper is the following theorem which generalizes \autoref{LL}. 
\begin{restatable}{theorem}{MRA}\label{MRA}
Let $G$ be a plane graph without subgraphs isomorphic to the configurations in \autoref{F-MR1}, and let $\mathscr{M}$ be a $4$-matching assignment for $G$. If $S$ consists of exactly one vertex or all the vertices on a $6^{-}$-cycle of $G$, then every $\mathscr{M}$-coloring $\phi$ of $G[S]$ can be extended to an $\mathscr{M}$-coloring $\varphi$ of $G$. 
\end{restatable}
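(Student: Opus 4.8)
The plan is to argue by contradiction through a minimal counterexample, combining a reducible-configuration analysis carried out by vertex identification with a discharging argument on the plane. Suppose the theorem fails and let $(G, \mathscr{M}, S, \phi)$ be a counterexample for which $|V(G)| + |E(G)|$ is as small as possible, with $G$ embedded in the plane. First I would record the routine normalizations: $G$ is connected, and we may assume it is $2$-connected by splitting at cut vertices and gluing the resulting colorings along the shared vertex; every vertex of $S$ is precolored by $\phi$ and must keep its color; and when $S$ is a $6^{-}$-cycle $C$ we may take $C$ to bound the outer face, so that the task becomes coloring the interior. Since $\mathscr{M}$ is a $4$-matching assignment, any vertex $v \notin S$ with $\deg(v) \le 3$ is immediately extendable, because its at most three neighbors forbid at most three of its four colors; removing such a $v$, coloring $G - v$ by minimality, and coloring $v$ last contradicts the choice of $G$. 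Hence every vertex outside $S$ has degree at least $4$, and sharpenings of this observation rule out several further small-degree configurations.

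The core of the argument is a list of reducible configurations established by vertex identification. For a candidate local structure I would build a smaller plane graph $G'$ by identifying two suitably chosen non-adjacent vertices whose incident edges can be straightened by \autoref{ST}, checking that $G'$ contains no subgraph isomorphic to a configuration in \autoref{F-MR1}, introduces no forbidden multi-edge, and inherits a $4$-matching assignment. Minimality then yields an $\mathscr{M}$-coloring of $G'$, which pulls back to a partial $\mathscr{M}$-coloring of $G$ in which the two identified vertices receive matched colors; because the relevant edges are straight, assigning both vertices the color of the merged vertex is consistent, after which the few remaining uncolored vertices are colored greedily using their surplus of available colors. Each such reduction contradicts minimality, so none of the configurations occurs in $G$, and $G$ inherits strong local structure around its $3$-, $4$-, $5$-, and $6$-faces.

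With these structural restrictions in hand I would run a discharging argument. Assign to each vertex and face $x$ the initial charge $\mu(x) = \deg(x) - 4$; by Euler's formula $\sum_{v}(\deg(v)-4) + \sum_{f}(\deg(f)-4) = -8$, so the total charge is negative, with the outer face treated separately so that it carries the global deficit. I would then design rules moving charge from $5^{+}$-faces and high-degree vertices to $3$-faces, $4$-faces, and $4$-vertices, using the absence of the configurations in \autoref{F-MR1} to control exactly how triangles and short cycles may cluster around a given face or vertex. The goal is to verify that every vertex and every internal face ends with non-negative final charge, which forces the total charge to be non-negative and contradicts the count of $-8$, completing the proof.

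The main obstacle, as always with this method, is the discharging bookkeeping: choosing rules delicate enough that precisely the forbidden configurations in \autoref{F-MR1}, and no stronger hypotheses, guarantee non-negativity at every $3$- and $4$-face and at every $4$- and $5$-vertex. A secondary but genuine difficulty is the identification step itself, where one must ensure that merging two vertices neither creates a forbidden configuration nor collapses the precolored cycle illegally and that the matching assignment transfers consistently. Here the fact that the statement is phrased for a precolored $6^{-}$-cycle rather than a $6^{-}$-face, as in \autoref{LL}, is exactly what keeps the reduction inside the inductive hypothesis, since an identification can turn a precolored face into a non-facial precolored cycle.
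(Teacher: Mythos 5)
Your overall plan --- minimal counterexample, degree bound outside $S$, reducible configurations by vertex identification after straightening with \autoref{ST}, then discharging --- is indeed the architecture of the paper's proof, and you correctly isolate why the statement is phrased for a precolored $6^{-}$-cycle rather than a $6^{-}$-face. But the one concrete commitment you make about the discharging is where the argument as written breaks. With $\mu(x)=\deg(x)-4$ and total charge $-8$, establishing that every vertex and every \emph{internal} face ends nonnegative yields no contradiction: the vertices of the precolored cycle $\mathcal{C}$ may have degree $2$ or $3$ and so carry initial charge $-2$ or $-1$, and your rules (which move charge from $5^{+}$-faces and high-degree vertices toward $3$-faces, $4$-faces and $4$-vertices) never compensate them; together with the outer face they can legitimately absorb the entire $-8$. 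Saying the outer face ``carries the global deficit'' concedes exactly this. The paper avoids the problem with a different normalization --- $\mu(v)=2\deg(v)-6$, $\mu(f)=\deg(f)-6$, and $\mu(D)=\deg(D)+6$ for the outer face, so the total is $0$ --- and then still must produce a \emph{strict} inequality somewhere: after showing every element is nonnegative, it exhibits a specific $4^{+}$-face $f^{*}$ incident with a special edge whose final charge is strictly positive. Any repaired version of your scheme needs an analogous quantitative treatment of the boundary (what the outer face, the $\mathcal{T}_{1}$- and $\mathcal{T}_{2}$-faces, and the special edges exchange) plus an explicit source of strict surplus; neither is supplied.

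Second, the reducibility step is named but not instantiated, and the choice of configurations is not free: it is dictated by where the discharging would otherwise fail. The paper needs precisely two identification lemmas --- \autoref{FOUR1-MR1} (a $4$-vertex cannot have two opposite internal $4$-neighbors) and \autoref{FOUR2-MR1} (an internal $4_{1}$-vertex cannot have an internal $4$-vertex as its neighbor on its triangle) --- and these are used, respectively, to show that every internal $(4,4,4,4,4^{+})$-face surrounded by five $3$-faces receives $\frac{1}{4}$ from at least four $5^{+}$-vertex ``sources'' outside the face, and that every special $3$-face is a $(4,5^{+},5^{+})$-face receiving $\frac{1}{2}+\frac{5}{4}+\frac{5}{4}$. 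Your generic description of rules sending charge only to $3$-faces, $4$-faces and $4$-vertices does not anticipate that internal $5$-faces are themselves deficient and must receive charge, sometimes from vertices at distance two (the source/sink rule). Finally, the identification arguments depend on first proving there is no separating $k$-cycle for $3\le k\le 6$ (\autoref{L1-MR1}); without that, the merged vertex can create loops, multiple edges, or a chord of $\mathcal{C}$, and the check that no configuration of \autoref{F-MR1} is created cannot be carried out. So the proposal is a faithful description of the method, but it leaves the two load-bearing components --- which configurations are reducible and how the charge actually balances along $\mathcal{C}$ --- unproved, and its stated route to the final contradiction is not valid as written.
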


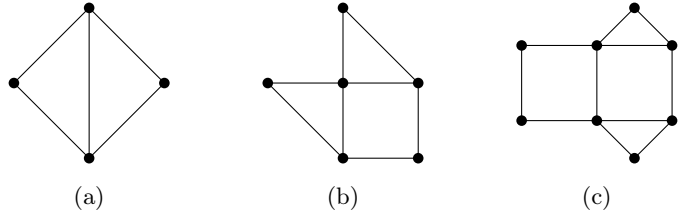
\begin{figure}[htbp]%
\centering
\subcaptionbox{\label{fig:subfig:a}}{\begin{tikzpicture}
\coordinate (A) at (1, 0);
\coordinate (B) at (0,1);
\coordinate (C) at (-1,0);
\coordinate (D) at (0,-1);
\draw (A)--(B)--(C)--(D)--cycle;
\draw (B)--(D);
\fill (A) circle (2pt)
(B) circle (2pt)
(C) circle (2pt)
(D) circle (2pt);
\end{tikzpicture}}
\hspace{1cm}
\subcaptionbox{\label{fig:subfig:b}}{\begin{tikzpicture}
\coordinate (A) at (1, 0);
\coordinate (B) at (0,1);
\coordinate (C) at (-1,0);
\coordinate (D) at (0,-1);
\coordinate (E) at (1, -1);
\draw (A)--(B)--(D)--(C)--(A)--(E)--(D);
\fill (0,0) circle (2pt)
(A) circle (2pt)
(B) circle (2pt)
(C) circle (2pt)
(D) circle (2pt)
(E) circle (2pt);
\end{tikzpicture}}
\hspace{1cm}
\subcaptionbox{\label{fig:subfig:c}}{\begin{tikzpicture}
\coordinate (A) at (1, 1);
\coordinate (B) at (0.5,1.5);
\coordinate (C) at (0,1);
\coordinate (D) at (-1,1);
\coordinate (E) at (-1, 0);
\coordinate (F) at (0, 0);
\coordinate (G) at (0.5, -0.5);
\coordinate (H) at (1, 0);
\draw (A)--(B)--(C)--(D)--(E)--(F)--(G)--(H)--cycle;
\draw (A)--(C)--(F)--(H);
\fill
(A) circle (2pt)
(B) circle (2pt)
(C) circle (2pt)
(D) circle (2pt)
(E) circle (2pt)
(F) circle (2pt)
(G) circle (2pt)
(H) circle (2pt);
\end{tikzpicture}}
\caption{The forbidden configurations in \autoref{MRA}.}
\label{F-MR1}
\end{figure}

In the above theorem, adjacent triangles are forbidden in the graph. In the second result of this paper, adjacent triangles are allowed but two triangles sharing exactly one vertex are forbidden. 
\begin{restatable}{theorem}{MRB}\label{MRB}
Let $G$ be a plane graph without subgraphs isomorphic to the configurations in \autoref{F-MR2}, and let $\mathscr{M}$ be a $4$-matching assignment for $G$. If $S$ is the vertices of a $6^{-}$-cycle of $G$, then every $\mathscr{M}$-coloring $\phi$ of $G[S]$ can be extended to an $\mathscr{M}$-coloring $\varphi$ of $G$. 
\end{restatable}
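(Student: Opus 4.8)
The plan is to argue by contradiction from a minimal counterexample and then run a discharging argument, following the blueprint already used for \autoref{LL} and \autoref{MRA}, but with discharging rules rebuilt around the fact that adjacent triangles are now permitted (so I cannot simply reduce to \autoref{MRA}). Let $(G, C, \phi)$ be a counterexample with $|V(G)| + |E(G)|$ minimum, where $C$ is a $6^{-}$-cycle, $S = V(C)$, and $\phi$ is an $\mathscr{M}$-coloring of $G[S]$ that does not extend to $G$. First I would reduce to the case in which every vertex of $G$ lies in the closed disk bounded by $C$: if $C$ had vertices of $G$ in both its open interior and open exterior, then $C$ together with each side would be a strictly smaller plane graph satisfying the hypotheses with $C$ still a precolored $6^{-}$-cycle, so by minimality $\phi$ would extend over each side separately; since there are no edges across $C$ in a plane embedding, the two extensions agree on $C$ and glue to an $\mathscr{M}$-coloring of $G$, a contradiction. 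Having placed $C$ on the outer boundary, I would invoke \autoref{ST} to rename lists along the relevant short cycles so that their edges become straight; this lets me treat $\phi$ as an honest proper coloring and makes the identification step below color-consistent.

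Next I would establish the reducible configurations. The basic one is that no interior vertex has degree at most $3$: deleting such a vertex $u$, coloring $G-u$ by minimality, and then choosing for $u$ one of the at least $4 - 3 = 1$ colors of $L_{u}$ not matched to a neighbor extends $\phi$. The substantive reductions use vertex identification: when an interior vertex $u$ of small degree has two neighbors $x, y$ that are nonadjacent, lie on a common face, share no obstructing common neighbor, and whose identification creates none of the forbidden subgraphs of \autoref{F-MR2}, I would identify $x$ and $y$ into one vertex, obtaining a smaller plane graph $G'$ that still avoids the configurations of \autoref{F-MR2} and still carries $C$ as a precolored $6^{-}$-cycle. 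By minimality $\phi$ extends over $G'$; pulling back, $x$ and $y$ receive a common color, so $u$ sees at most $d(u) - 1 \le 3$ colors on its neighborhood and can be colored. The forbidden configurations are exactly what guarantee that enough such safe identifications exist and, conversely, that the dangerous local patterns (in particular two triangles meeting at a single vertex) never occur in $G$; I would catalogue the surviving local structure around $3$-, $4$- and $5$-faces accordingly.

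With the structure in hand, I would set up discharging. By Euler's formula, assigning initial charge $d(x) - 4$ to every vertex and every face $x$ gives total charge $-8$. I would design rules that move charge from $5^{+}$-faces and high-degree vertices toward triangles and toward the low-degree vertices of $C$, with the outer face acting as the reservoir that feeds the boundary. The goal is to show that after discharging every vertex and every face, including the outer one, has nonnegative final charge; since total charge is preserved and equals $-8 < 0$, this is the desired contradiction. The verification splits into a local check at each interior face, where the absence of the \autoref{F-MR2} patterns bounds how triangles cluster around a vertex so that each triangle can collect enough from its surroundings, and a global check on the boundary, where the shortness $|C| \le 6$ of the precolored cycle caps the total deficit the outer face must cover.

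I expect the main obstacle to be twofold, living precisely in the interaction of the two new features of this theorem. First, because adjacent triangles are now allowed, triangles may share edges and accumulate around a vertex, so the discharging rules must quantify this clustering; the role of the forbidden configurations, above all the two-triangles-sharing-a-vertex pattern, is to cap the clustering, and making the charge bookkeeping around a vertex incident to several triangular faces close is the delicate numerical heart of the proof. Second, each identification must be shown to be genuinely safe: it must not create a multiple edge, it must respect the planar embedding, it must keep the matching assignment consistent on every short cycle it touches (which is why \autoref{ST} is applied first), and it must not manufacture any configuration of \autoref{F-MR2}. Enumerating the neighbor pairs that can be identified without producing a new bowtie or a new short-cycle-plus-triangle pattern, across all admissible local configurations, is where the bulk of the case analysis will concentrate.
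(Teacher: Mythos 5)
Your outline matches the paper's template (minimal counterexample, reductions by vertex identification, discharging with the outer face feeding the boundary), but as written it is a plan rather than a proof: the discharging rules are never stated, no final charge is verified, and the structural lemmas are only gestured at. You yourself defer ``the delicate numerical heart'' and ``the bulk of the case analysis,'' but that deferred material \emph{is} the theorem. Concretely, the paper's proof consists of (i) a list of properties of the minimal counterexample (Lemma~\ref{L1-MR2}), chief among them that there is \emph{no separating $k$-cycle for any} $3 \leq k \leq 6$ --- not merely that $\mathcal{C}$ itself is non-separating, which is all your gluing argument establishes; (ii) the identification lemma (Lemma~\ref{FOUR2-MR2}) showing a special face must be a $(4,5^{+},5^{+})$-face, whose safety analysis relies precisely on the absence of short separating cycles to force the two identified neighbors to be at distance at least four and to rule out new chords of $\mathcal{C}$; and (iii) explicit rules \ref{R1-MR2}--\ref{R3-MR2} with a face-by-face verification, ending with the existence of a $4^{+}$-face on a special edge (Lemma~\ref{4+face}) whose final charge is strictly positive. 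None of these appears in your proposal beyond the assertion that such things should exist, so the argument has a genuine gap at every load-bearing point.

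Two specific choices in your sketch would also need repair before the plan could close. First, your normalization $\mu(x) = d(x) - 4$ with total $-8$ and the goal ``everything nonnegative'' puts the entire burden on the boundary: a degree-$2$ vertex of $\mathcal{C}$ starts at $-2$ and the outer face at $\deg(D) - 4 \leq 2$, so roughly $12$ units must be imported to the boundary from inner faces with no mechanism specified. The paper instead uses $\mu(v) = 2\deg(v) - 6$, $\mu(f) = \deg(f) - 6$, and $\mu(D) = \deg(D) + 6$ (total zero), lets every boundary vertex dump its charge into $D$, has $D$ pay $2$ per special edge, and derives the contradiction from a single face of strictly positive final charge; the arithmetic $\mu'(D) = 6 - \deg(D) \geq 0$ is exactly what this normalization buys. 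Second, since adjacent triangles are allowed here, the bound ``every $4^{+}$-vertex is incident with at most two $3$-faces'' (Lemma~\ref{L1-MR2}\ref{Le-MR2}) must come from the forbidden configurations of \autoref{F-MR2}, not from triangle-adjacency as in \autoref{MRA}; your text acknowledges this clustering issue but supplies neither the bound nor the rule (the paper's $\frac{5}{4}$ from $5^{+}$-vertices to $3$-faces) that exploits it.
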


\begin{figure}[htbp]%
\centering
\subcaptionbox{\label{fig:subfig:2-a}}{\begin{tikzpicture}
\coordinate (A) at (1, 0);
\coordinate (B) at (0,-1);
\coordinate (C) at (-1,0);
\coordinate (D) at (0,1);
\coordinate (O) at (0, 0);
\draw (A)--(B)--(D)--(C)--cycle;
\fill
(A) circle (2pt)
(B) circle (2pt)
(C) circle (2pt)
(D) circle (2pt)
(O) circle (2pt);
\end{tikzpicture}}
\hspace{2cm}
\subcaptionbox{\label{fig:subfig:2-b}}{\begin{tikzpicture}
\coordinate (O) at (0, 0);
\coordinate (A) at (0, -1);
\coordinate (B) at (-1,-1);
\coordinate (C) at (-1,0);
\coordinate (D) at (135:1);
\coordinate (E) at (0,1);
\draw (O)--(A)--(B)--(C)--(D)--(E)--cycle;
\draw (C)--(O)--(D);
\fill
(O) circle (2pt)
(A) circle (2pt)
(B) circle (2pt)
(C) circle (2pt)
(D) circle (2pt)
(E) circle (2pt);
\end{tikzpicture}}
\caption{The forbidden configurations in \autoref{MRB}.}
\label{F-MR2}
\end{figure}
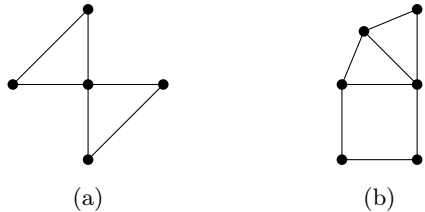

Wang and Lih \cite{MR1935837} showed that every planar graph without intersecting triangles is $4$-choosable. Luo \cite{MR2292964} improved it to toroidal graph, showing that every toroidal graph without intersecting triangles is $4$-choosable. As a corollary of \autoref{MRB}, the following corollary generalizes Wang and Lih's result. 

\begin{corollary}
Every planar graph without intersecting triangles is DP-$4$-colorable. 
\end{corollary}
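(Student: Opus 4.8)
The plan is to deduce the corollary from \autoref{MRB}: I would check that forbidding intersecting triangles already forbids both configurations in \autoref{F-MR2}, and then seed the color extension of \autoref{MRB} with a short cycle. Recall that two triangles are intersecting precisely when they share at least one vertex, so the hypothesis means that any two distinct triangles of $G$ are vertex-disjoint; in particular $G$ has no two triangles sharing an edge.

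First I would carry out the configuration-containment check. In \autoref{F-MR2} the left-hand configuration is a bowtie, i.e.\ two triangles meeting in exactly one vertex, and the right-hand configuration is a $6$-cycle with two chords at a common vertex, which contains two triangles sharing an edge. In either case the configuration contains a pair of intersecting triangles, so any subgraph of $G$ isomorphic to it would yield intersecting triangles in $G$. Hence a planar graph $G$ without intersecting triangles contains no subgraph isomorphic to either configuration, and \autoref{MRB} applies to $G$ under any fixed plane embedding and any $4$-matching assignment $\mathscr{M}$.

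Next I would produce the seed coloring. Since DP-$4$-colorability is preserved by disjoint unions, I may assume $G$ is connected. If $G$ has girth at least $7$ (in particular if $G$ is acyclic), then $G$ is triangle-free, so every subgraph $H$ with $|V(H)| \ge 3$ has $|E(H)| \le 2|V(H)| - 4$ and thus a vertex of degree at most $3$; hence $G$ is $3$-degenerate and can be $\mathscr{M}$-colored greedily, each vertex losing at most three of its four colors. Otherwise $G$ has a cycle of length at most $6$, and taking $C$ to be a shortest such cycle makes $C$ chordless, so $G[V(C)]$ is exactly the cycle $C$, of length between $3$ and $6$. A cycle is $2$-degenerate, hence DP-$3$-colorable, so $\mathscr{M}$ restricted to $V(C)$ admits an $\mathscr{M}$-coloring $\phi$ of $G[V(C)]$.

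Finally, \autoref{MRB} extends $\phi$ from this $6^{-}$-cycle to an $\mathscr{M}$-coloring of all of $G$, and since $\mathscr{M}$ was arbitrary, $G$ is DP-$4$-colorable. The only genuinely delicate step is the first one — confirming that each drawn configuration really contains two intersecting triangles — as everything else is routine; the one wrinkle worth flagging is that \autoref{MRB} must be seeded on a $6^{-}$-cycle rather than on a single vertex, which is exactly why the girth-at-least-$7$ case has to be dispatched separately (though easily, via degeneracy).
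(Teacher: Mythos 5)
Your proof is correct and follows the route the paper intends: the bowtie in \autoref{fig:subfig:2-a} and the edge-sharing pair of triangles in \autoref{fig:subfig:2-b} are both pairs of intersecting triangles, so \autoref{MRB} applies to any planar graph without intersecting triangles. The paper states the corollary without proof, and your extra care in seeding \autoref{MRB} --- taking a shortest (hence induced) $6^{-}$-cycle, and dispatching the girth-at-least-$7$ case by $3$-degeneracy, which is genuinely needed since \autoref{MRB}, unlike \autoref{MRA}, cannot be seeded on a single vertex --- correctly fills in the details the authors leave implicit.
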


Li and Wang showed the following result as a corollary in \cite{Li2019}. 
\begin{theorem}\label{House-}
Every toroidal graph without subgraphs isomorphic to the configurations in \autoref{NOA} is DP-$4$-colorable. 
\end{theorem}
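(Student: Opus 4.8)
The plan is to argue by contradiction via a minimal counterexample, together with a discharging argument whose local rules are inherited wholesale from the planar extendability theorems; the only genuinely new ingredient is the torus Euler formula. So suppose $G$ is a graph embedded on the torus, containing none of the configurations in \autoref{NOA}, that is not DP-$4$-colorable, and fix a $4$-matching assignment $\mathscr{M}$ admitting no $\mathscr{M}$-coloring; choose $G$ with the fewest vertices. First I would run the standard reductions: $G$ is connected, and since a vertex of degree at most $3$ is blocked by at most $3$ colors through $\mathscr{M}$, minimality forces $\delta(G) \ge 4$ (delete such a vertex, color the rest by minimality, then extend). The forbidden configurations in \autoref{NOA} are precisely those that exclude the small reducible subgraphs appearing in the proof of \autoref{MRTHREE}; because each such configuration is a bounded subgraph, $G$ avoids all of them on the torus exactly as a planar graph would, so the entire catalogue of reducible configurations established for the planar case remains forbidden here.

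The core step is discharging. Assign to each vertex $v$ the charge $d(v)-4$ and to each face $f$ the charge $d(f)-4$. Summing and using $\sum_{v} d(v) = \sum_{f} d(f) = 2|E(G)|$ together with the torus Euler formula $|V(G)| - |E(G)| + |F(G)| = 0$, the total charge is
\[
\sum_{v} (d(v)-4) + \sum_{f}(d(f)-4) = 4|E(G)| - 4\bigl(|V(G)| + |F(G)|\bigr) = -4\bigl(|V(G)| - |E(G)| + |F(G)|\bigr) = 0.
\]
This is the decisive contrast with the plane, where the identical count yields $-8$. I would then apply verbatim the discharging rules used to prove \autoref{MRTHREE}. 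In the planar setting those rules are engineered so that, once all reducible configurations are excluded, every vertex and every face ends with nonnegative charge, contradicting the negative planar total. On the torus the same rules still leave every vertex and every face with nonnegative final charge, but now the total is $0$; hence every vertex and every face must end with final charge \emph{exactly} $0$.

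The remaining work, and the part I expect to be the main obstacle, is to show that this ``tight everywhere'' situation cannot occur. Charge zero at every vertex and face is extremely rigid: it pins down each vertex degree, each face length, and the precise incidences by which triangles and larger faces may meet, typically forcing $G$ into a very short list of highly symmetric toroidal near-triangulations or quadrangulations. For each such rigid structure I would either locate one of the configurations of \autoref{NOA} inside it, an immediate contradiction, or, failing that, extract a short non-contractible cycle, cut the torus along it to produce a planar graph, and recover an $\mathscr{M}$-coloring of $G$ by applying a planar extendability theorem to the resulting boundary data. Checking that the cut creates no new forbidden configuration and that the two boundary copies of the cut cycle can be colored consistently is the delicate bookkeeping of the argument; once the rigid case is dispatched, everything else is a mechanical transfer of the planar proof.
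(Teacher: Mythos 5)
There is a genuine gap, and it sits exactly where you admit it does. First, a framing point: this paper does not prove \autoref{House-} at all --- it quotes it from \cite{Li2019}, where it is obtained as a corollary of a variable-degeneracy theorem for toroidal graphs; the new contribution here is the \emph{planar} extendability strengthening \autoref{MRC}. So your proposal is not following the paper's route, and it must stand on its own. It does not, for two reasons. The decisive one is the endgame: your charge count correctly gives total $0$ on the torus, so after verifying nonnegativity you must exclude the configuration in which \emph{every} vertex and face has final charge exactly $0$, and this is precisely the step you defer with ``typically forcing'' and ``I would either \dots or \dots''. Nothing in the proposal actually classifies the tight structures or dispatches them. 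Worse, the proposed fallback --- cut along a short non-contractible cycle and invoke a planar extendability theorem --- does not work as stated: cutting the torus along a cycle yields an annulus, i.e.\ a plane graph with \emph{two} boundary copies of that cycle which must receive identical colorings; \autoref{MRC} (and \autoref{LL}) precolor a single $7^{-}$-cycle and give no control over a second boundary cycle, nor any guarantee that the shortest non-contractible cycle has length at most $7$.

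The second problem is your claim that ``the entire catalogue of reducible configurations established for the planar case remains forbidden here.'' Avoiding the forbidden \emph{subgraphs} of \autoref{NOA} on the torus is indeed automatic, but that is not what reducibility requires. The reducibility proofs in this family of arguments (see \autoref{FOUR1-MR1} and \autoref{FOUR2-MR1}) identify two neighbours $w_{2},w_{4}$ of a $4$-vertex and need the resulting graph to be simple, to remain embedded in the same surface, and to still avoid the forbidden configurations; the key ``distance at least $4$ or $5$'' claims are derived from the absence of short \emph{separating} cycles. On the torus a short cycle need not separate, so those lemmas do not transfer verbatim, and without them the discharging rules (which lean on statements such as ``every special face is a $(4,5^{+},5^{+})$-face'') lose their justification. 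Also note a smaller inaccuracy: the configurations of \autoref{NOA} are not ``precisely those'' of \autoref{MRTHREE}; forbidding \autoref{NOA} is the \emph{stronger} hypothesis (each configuration of \autoref{E} contains one of \autoref{NOA}), which is why the toroidal statement is plausible at all --- but it still has to be proved, and the hard toroidal bookkeeping is exactly what \cite{Li2019} supplies and your sketch omits.
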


\begin{figure}[htbp]%
\centering
\subcaptionbox{\label{fig:subfig:a-}}[0.25\linewidth]{\begin{tikzpicture}
\coordinate (A) at (45:1);
\coordinate (B) at (135:1);
\coordinate (C) at (225:1);
\coordinate (D) at (-45:1);
\coordinate (H) at (90:1.414);
\draw (A)--(H)--(B)--(C)--(D)--cycle;
\draw (A)--(B);
\fill (A) circle (2pt)
(B) circle (2pt)
(C) circle (2pt)
(D) circle (2pt)
(H) circle (2pt);
\end{tikzpicture}}
\subcaptionbox{\label{fig:subfig:b-}}[0.25\linewidth]{\begin{tikzpicture}
\coordinate (A) at (30:1);
\coordinate (B) at (150:1);
\coordinate (C) at (225:1);
\coordinate (D) at (-45:1);
\coordinate (H) at (90:1.414);
\coordinate (X) at (60:1.4);
\coordinate (Y) at (120:1.4);
\draw (A)--(X)--(H)--(Y)--(B)--(C)--(D)--cycle;
\draw (A)--(H)--(B);
\fill (A) circle (2pt)
(B) circle (2pt)
(C) circle (2pt)
(D) circle (2pt)
(H) circle (2pt)
(X) circle (2pt)
(Y) circle (2pt);
\end{tikzpicture}}
\caption{The forbidden configurations in \autoref{House-} and \ref{MRC}.}
\label{NOA}
\end{figure}
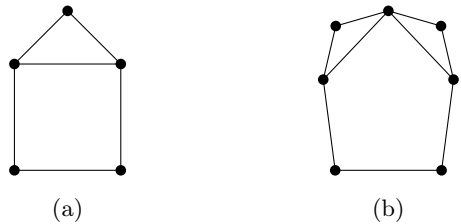

For planar graphs, we give the following result which is stronger than \autoref{House-}. 
\begin{restatable}{theorem}{MRC}\label{MRC}
Let $G$ be a plane graph without subgraphs isomorphic to the configurations in \autoref{NOA}, and let $\mathscr{M}$ be a $4$-matching assignment for $G$. If $S$ is the vertices of a $7^{-}$-cycle of $G$, then every $\mathscr{M}$-coloring $\phi$ of $G[S]$ can be extended to an $\mathscr{M}$-coloring $\varphi$ of $G$. 
\end{restatable}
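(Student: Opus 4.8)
The plan is to argue from a minimal counterexample, combining a family of reducible-configuration lemmas proved by \textbf{vertex identification} with a global \textbf{discharging} argument, following the same scheme used for \autoref{MRA} and \autoref{MRB}. Suppose the statement fails, and among all counterexamples choose a plane graph $G$, a $4$-matching assignment $\mathscr{M}$, a $7^{-}$-cycle $C$ with $S = V(C)$, and a non-extendable $\mathscr{M}$-coloring $\phi$ of $G[S]$ so that $|V(G)| + |E(G)|$ is minimum. The cycle $C$ separates the plane, and once $\phi$ is fixed on $C$ the interior and the exterior of $C$ can be colored independently; hence by minimality I may assume that $C$ bounds a face, which I take to be the outer face $f_0$, with every other vertex lying inside $C$. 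A standard splitting argument then shows $G$ is $2$-connected and $C$ is chordless: a chord of $C$ would split $C$ into two shorter cycles $C_1, C_2$, each bounding a proper subgraph that avoids \autoref{NOA} and carries the restricted precoloring, and the two extensions guaranteed by minimality agree on the chord and combine into an extension on $G$.

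I would then collect the local structure forced on the minimal counterexample. First, every vertex $v \notin V(C)$ satisfies $d(v) \ge 4$, since a $3^{-}$-vertex could be deleted, the smaller graph colored by minimality, and $v$ then colored avoiding its at most three matched colors out of four. The deeper restrictions come from identifying two non-adjacent vertices $x, y$ (typically sharing a neighbor) into a single vertex: after using \autoref{ST} to rename lists so that the edges joining $x, y$ to their common neighborhood become straight, the identification is compatible with $\mathscr{M}$, the resulting plane graph is smaller, an extension exists by minimality, and it pulls back to $G$ because $x, y$ receive a common color. The role of \autoref{NOA} is exactly to guarantee that these identifications never create a forbidden configuration in the reduced graph; the resulting lemmas say, in effect, that triangles in $G$ are sparse — reflecting that no triangle is adjacent to a $4$-cycle and no two triangles share a vertex inside a $5$-cycle, exactly as \autoref{NOA} forbids.

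With this structure I would run a discharging argument. Assign to every vertex and every face the initial charge $\mu(x) = d(x) - 4$; by Euler's formula the total charge is $-8$. Since $C$ is precolored, the outer face $f_0$ is the only element allowed to retain negative charge, and its initial charge is $d(f_0) - 4 = |C| - 4 \le 3$. I would design rules that move charge from $5^{+}$-vertices and from large faces to $3$-faces and to the low-degree vertices and short faces incident with $C$. Using the reducibility lemmas, I would check that after discharging every vertex and every inner face has nonnegative charge, and that $f_0$ loses only a bounded amount across its at most seven incident elements, so that its final charge still exceeds $-8$. Since discharging preserves the total, this contradicts the total being $-8$, and the theorem follows.

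The main obstacle will be the reducibility step, not the discharging bookkeeping. In the DP setting one cannot simply count available colors: after deleting or identifying vertices the matching assignment must be re-described on the smaller graph, and an identification of two vertices is legitimate only after \autoref{ST} has straightened the connecting edges, which in turn requires the short cycles through those edges to be consistent and full. Carrying out the identifications so that the reduced graph provably avoids both configurations of \autoref{NOA} — the triangle sharing an edge with a $4$-cycle, and the two triangles sharing a vertex inside a $5$-cycle — is the delicate point, and it is precisely this analysis, together with the need for $f_0$ to absorb the extra deficiency coming from a cycle of length up to $7$ rather than $6$, that pins down both the list of forbidden configurations and the exact discharging rules.
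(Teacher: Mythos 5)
Your outline matches the paper's general scheme (minimal counterexample, structural lemmas, discharging), but as written it is a plan rather than a proof, and the parts you defer are exactly where the content of this theorem lives. You never state the discharging rules, never verify that vertices and inner faces end up nonnegative, and never explain why a cycle of length up to $7$ (rather than $6$) is admissible. In the paper that last point is the entire endgame: the outer face $D$ ends with charge $6 - \deg(D) + p$, where $p$ is the surplus returned by inner faces, so for $\deg(D) = 7$ one must produce $p \geq 2$. This is done by showing that every $4^{+}$-face meeting $\mathcal{C}$ returns at least $1$ to $D$, and then by a case analysis on $3$-faces adjacent to $D$ (using the forbidden configurations of \autoref{NOA}) showing that $\mathcal{N}$ contains either a $6^{+}$-face or at least two $4^{+}$-faces. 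Nothing in your proposal engages with this step, and without it the argument does not close. A secondary misdiagnosis: you flag vertex identification as the main obstacle, but for this particular theorem the paper needs no identification lemmas at all --- the structural facts it uses (minimum degree four for internal vertices, no short separating cycles, no common internal neighbor of nonadjacent vertices of $\mathcal{C}$, every $4$-face adjacent to four $4^{+}$-faces, every $k$-vertex in at most $\lfloor 2k/3 \rfloor$ triangular faces) all follow from deletion and planarity; identification is used only in \autoref{MRA} and \autoref{MRB}.

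There is also a concrete problem with your normalization $\mu(x) = d(x) - 4$, total $-8$. Vertices of $\mathcal{C}$ may have degree $2$ and thus charge $-2$; since $\mathcal{C}$ is induced, the unique inner face at such a vertex can be a $4$-face of charge $0$, so essentially only $f_{0}$ can pay these vertices. With $|\mathcal{C}| = 7$, six boundary vertices of degree $2$ already cost $12$, while $f_{0}$ starts at $3$ and must stay above $-8$; your claimed contradiction does not follow without ruling such configurations out, which your outline does not do. The paper sidesteps this entirely by the asymmetric charges $\mu(v) = 2\deg(v) - 6$, $\mu(f) = \deg(f) - 6$, $\mu(D) = \deg(D) + 6$: each boundary vertex hands its full charge to $D$, the special-edge rule redistributes $2(\deg(v) - 2)$ per boundary vertex back to the faces in $\mathcal{N}$, and $D$'s balance collapses to $6 - \deg(D) + p$. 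If you keep your normalization you must redo this bookkeeping from scratch; either way, the reducible configurations, the rules, and the final positivity argument at $D$ all still need to be supplied.
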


We need some definitions for plane graphs in the following section. The unbounded face is called the {\bf outer} face, and the other faces are called {\bf inner faces}. An {\bf internal vertex} is a vertex that is not incident with outer face. An {\bf internal face} is a face having no common vertices with the outer cycle. Let $\mathcal{O}$ be a cycle of a plane graph $G$, the cycle $\mathcal{O}$ divides the plane into two regions, the subgraph induced by all the vertices in the unbounded region is denoted by $\ext(\mathcal{O})$, and the subgraph induced by all the vertices in the other region is denoted by $\int(\mathcal{O})$. If both $\int(\mathcal{O})$ and $\ext(\mathcal{O})$ contain at least one vertex, then we call the cycle $\mathcal{O}$ a {\bf separating cycle} of $G$. The subgraph induced by the complement of $V(\ext(\mathcal{O}))$ is denoted by $\Int(\mathcal{O})$, and the subgraph induced by the complement of $V(\int(\mathcal{O}))$ is denoted by $\Ext(\mathcal{O})$. 

Let $G$ be a plane graph and the outer face be bounded by a cycle. A $4$-vertex is a {\bf $4_{k}$-vertex} if it is incident with exactly $k$ triangular-faces. An inner $3$-face is a {\bf $\mathcal{T}_{k}$-face} if it has exactly $k$ common vertices with the outer cycle. A $3$-face is a {\bf special face} if it is an internal face incident with a $4_{1}$-vertex. A {\bf special edge} is an edge having a common vertex with the outer cycle but it is not an edge of the outer cycle. Let $\mathcal{N}$ be the set of inner faces having at least one common vertex with the outer face. 

\section{Proof of \autoref{MRA}}\label{sec2}
\MRA*
\begin{proof}
Suppose that $G$ is a minimal counterexample to \autoref{MRA}. That is, there exists an $\mathscr{M}$-coloring of $G[S]$ that can not be extended to an $\mathscr{M}$-coloring of $G$ such that 
\begin{equation}\label{EQ1-MR1}
|V(G)| \text{ is minimized.}
\end{equation}
Subject to \eqref{EQ1-MR1}, 
\begin{equation}\label{EQ2-MR1}
|E(G)| - |S| \text{ is minimized.}
\end{equation}

The following structural results \autoref{L1-MR1}\ref{La-MR1}--\ref{Lf-MR1} are almost the same with that in \cite{MR3881665, Lu}, so the proofs are omitted here. Let $f$ be an internal $(4, 4, 4, 4, 4^{+})$-face adjacent to five 3-faces. If $w$ is incident with one of the five 3-faces but not on $f$, then we call $w$ a related {\bf source} of $f$ and $f$ a {\bf sink} of $w$. 

\begin{lemma}\label{L1-MR1}
\text{}
\begin{enumerate}[label = (\alph*)]
\item\label{La-MR1} $S \neq V(G)$; 
\item\label{Lb-MR1} $G$ is $2$-connected, and thus the boundary of every face is a cycle; 
\item\label{Lc-MR1} each vertex not in $S$ has degree at least four; 
\item\label{Ld-MR1} either $|S| = 1$ or $G[S]$ is an induced cycle of $G$; 
\item\label{Le-MR1} there is no separating $k$-cycle for $3 \leq k \leq 6$;
\item\label{Lf-MR1} $G[S]$ is an induced cycle of $G$, so we may assume that $\mathcal{C} = G[S]$ is the outer cycle; 
\item\label{Lg-MR1} if $x$ and $y$ are two nonconsecutive vertices on $\mathcal{C}$, then they have no common neighbor in $\int(\mathcal{C})$;
\item\label{Lh-MR1} if $f$ is a sink in $G$, then at most one of its source is on the outer cycle $\mathcal{C}$;  
\item\label{Li-MR1} there is no $4_{2}$-vertex incident with a $4$-face. 
\qed
\end{enumerate}
\end{lemma}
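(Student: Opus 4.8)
The plan is to read \autoref{L1-MR1} as the catalogue of reducible configurations that a minimal counterexample cannot contain, and to prove each part by a single template: assume the asserted property fails, extract from $G$ a strictly smaller instance---by deleting a vertex, by identifying two vertices, or by cutting along a cycle---so that one of the minimality hypotheses \eqref{EQ1-MR1}--\eqref{EQ2-MR1} yields an $\mathscr{M}$-coloring of it, and then lift that coloring to an $\mathscr{M}$-coloring of $G$, contradicting the choice of $G$. Two observations are used throughout. First, DP-coloring is locally like list coloring: through each incident matching $\mathscr{M}_{uv}$ an already-colored neighbor $u$ forbids at most one color at $v$, so any vertex with at most three colored neighbors and $|L(v)| \geq 4$ can still be colored. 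Second, \autoref{ST} lets me rename the lists on a subgraph so that a chosen cycle becomes straight, provided its edges are full and the matching is consistent on it; I will secure fullness using \eqref{EQ2-MR1}.

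Parts \ref{La-MR1}, \ref{Lc-MR1}, and \ref{Lb-MR1} are routine and I would dispatch them first. For \ref{La-MR1}, if $S = V(G)$ then the given coloring of $G[S]$ already colors all of $G$. For \ref{Lc-MR1}, a vertex $v \notin S$ with $\deg(v) \leq 3$ may be deleted: \eqref{EQ1-MR1} colors $G - v$, and at most three colors are blocked at $v$, so a free color remains. For \ref{Lb-MR1}, a disconnection or a cut vertex $x$ lets me color the piece containing $S$ and each remaining piece separately by minimality, treating $x$ as a single precolored vertex and using \autoref{ST} to make the pieces agree along $x$; $2$-connectivity then forces every facial walk to be a cycle.

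The substance lies in parts \ref{Ld-MR1}--\ref{Li-MR1}. The central reduction is \ref{Le-MR1}: given a separating $k$-cycle $\mathcal{O}$ with $3 \leq k \leq 6$, I first color $\Ext(\mathcal{O})$, which is smaller and contains $S$, thereby coloring $\mathcal{O}$; after arranging that the edges of $\mathcal{O}$ are full and consistent and applying \autoref{ST}, I may regard $\mathcal{O}$ as a precolored $6^{-}$-cycle and extend into $\Int(\mathcal{O})$ by \eqref{EQ1-MR1}, a contradiction, the forbidden configurations of \autoref{F-MR1} being inherited by both sides. Parts \ref{Ld-MR1} and \ref{Lf-MR1} then follow: a chord of $G[S]$ would split the $6^{-}$-cycle into two shorter cycles and violate \eqref{EQ2-MR1} or \ref{Le-MR1}, so $G[S]$ is induced, and the single-vertex case is absorbed into the cycle case as in \cite{MR3881665, Lu}, after which $\mathcal{C} = G[S]$ is taken as the outer face. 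For \ref{Lg-MR1}, a common inner neighbor $w$ of two nonconsecutive vertices of $\mathcal{C}$ would close up a short separating cycle through $w$ and an arc of $\mathcal{C}$, contradicting \ref{Le-MR1}. Finally, \ref{Lh-MR1} and \ref{Li-MR1} are local incidence statements about sinks and $4_{2}$-vertices that I would settle by the vertex-identification method: identify two suitable non-adjacent vertices of a small face, verify that the result stays planar and creates no forbidden subgraph, color the smaller graph, and read the coloring back, the two identified vertices sharing one color.

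The main obstacle is not any individual reduction but making \autoref{ST} legitimately applicable, since a priori the matchings are only matchings whereas the lemma needs full, consistent cycles. I would handle this exactly as in \cite{MR3881665, Lu}: \eqref{EQ2-MR1} forces every edge meeting $S$, and every edge of a cycle $\mathcal{O}$ used in \ref{Le-MR1}, to be full, because deleting an unmatched pair would strictly decrease $|E(G)| - |S|$ without changing colorability; consistency around each short cycle is then checked directly. The second, more tedious obstacle is the verification, in \ref{Lg-MR1}--\ref{Li-MR1}, that the vertex identifications neither destroy planarity nor create one of the configurations in \autoref{F-MR1}; this case analysis, governed by the exclusion of $4$-cycles adjacent to two triangles, is where most of the work sits and is presumably why the authors omit it in favor of the essentially identical treatment in \cite{MR3881665, Lu}.
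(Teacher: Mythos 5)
Your template (delete / identify / cut along a separating cycle, then invoke the minimality conditions and lift the coloring) is the right engine for parts \ref{La-MR1}--\ref{Lf-MR1}, which the paper itself omits as standard, and your one-line argument for \ref{Lg-MR1} is essentially the paper's: the two cycles formed by $xzy$ and the two arcs of $\mathcal{C}$ have length at most $|\mathcal{C}|\leq 6$, so by \ref{Le-MR1} neither is separating, forcing $\deg(z)=2$ and contradicting \ref{Lc-MR1}. (Your claim that such a cycle "would be separating" implicitly uses \ref{Lc-MR1} to put a third neighbor of $z$ inside one of the two regions; that should be said explicitly, but it is the same argument.) A minor quibble: you do not need \eqref{EQ2-MR1} to make matchings full --- enlarging $\mathscr{M}_{uv}$ to a perfect matching only shrinks the set of admissible colorings, so fullness is free; \eqref{EQ2-MR1} is typically used for other purposes (e.g.\ induced-ness of $G[S]$).

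The genuine gap is in your treatment of \ref{Lh-MR1} and \ref{Li-MR1}. You propose to prove both "by the vertex-identification method: identify two suitable non-adjacent vertices of a small face, \dots, color the smaller graph, and read the coloring back." But neither statement is a colorability/reducibility statement, and no identification argument is indicated or would naturally produce a contradiction. Part \ref{Li-MR1} is a direct consequence of the forbidden subgraphs: a $4_{2}$-vertex incident with a $4$-face immediately exhibits \autoref{fig:subfig:a} or \autoref{fig:subfig:b}; there is nothing to color. Part \ref{Lh-MR1} is a planarity/distance argument: with $f=v_1\cdots v_5$ a sink and sources $x_1,\dots,x_5$, two sources $x_i,x_{i+1}$ on $\mathcal{C}$ would be consecutive on $\mathcal{C}$ by \ref{Lg-MR1} and hence create adjacent triangles ($x_iv_{i+1}x_{i+1}$ next to $x_iv_iv_{i+1}$), while two sources $x_i,x_{i+2}$ on $\mathcal{C}$ would split $\mathcal{C}$ by the path $x_iv_{i+1}v_{i+2}x_{i+2}$ into two separating cycles, each of length at least $7$ by \ref{Le-MR1}, forcing $|\mathcal{C}|\geq 8$. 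Your proposal, as written, gives no route to either conclusion: you would need to say which vertices are identified, why the resulting graph avoids \autoref{F-MR1}, and---crucially---what contradiction the lifted coloring yields, and for these two purely structural facts there is none to be had. The fix is simply to replace the identification plan for \ref{Lh-MR1} and \ref{Li-MR1} with the direct arguments above, which rely only on \ref{Lc-MR1}, \ref{Le-MR1}, \ref{Lg-MR1} and the exclusion of the configurations in \autoref{F-MR1}.
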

\begin{proof}
\ref{Lg-MR1} Suppose that $x$ and $y$ are two nonconsecutive vertices on $\mathcal{C}$, and they have a common neighbor $z \in \int(\mathcal{C})$. The two vertices $x$ and $y$ divide the cycle $\mathcal{C}$ into two paths $P_{1}[x, y]$ and $P_{2}[x, y]$. It is observed that $P_{1}[x, y] \cup xzy$ and $P_{2}[x, y] \cup xzy$ are two cycles with length at most $|\mathcal{C}|$, thus these two cycles are not separating cycles by \autoref{L1-MR1}\ref{Le-MR1}, and thus $z$ must be an internal $2$-vertex, but this contradicts \autoref{L1-MR1}\ref{Lc-MR1}. 

\ref{Lh-MR1} Let $f = v_{1}v_{2}v_{3}v_{4}v_{5}$ be an internal 5-face adjacent to five 3-face $x_{i}v_{i}v_{i+1}$ for $1 \leq i \leq 5$. It is observed that $x_{1}, x_{2}, x_{3}, x_{4}$ and $x_{5}$ are five distinct vertices. Suppose that $x_{i}$ and $x_{i+1}$ are on the outer cycle $\mathcal{C}$, where the subscripts are taken module~$5$. By \autoref{L1-MR1}\ref{Lg-MR1}, $x_{i}$ and $x_{i+1}$ are consecutive on $\mathcal{C}$, thus $x_{i}v_{i+1}x_{i+1}$ is a triangle, but this contradicts that there is no adjacent triangles. 

Suppose that $x_{i}$ and $x_{i+2}$ are on the outer cycle $\mathcal{C}$, where the subscripts are taken module~$5$. By the above arguments, $x_{i+1}$ and $x_{i+3}$ are internal vertices. The outer cycle $\mathcal{C}$ and the path $P = x_{i}v_{i+1}v_{i+2}x_{i+2}$ form two cycles containing $P$. It is observed that these two cycles are separating cycles, thus the lengths are at least 7 by \autoref{L1-MR1}\ref{Le-MR1}, and thus $|\mathcal{C}| \geq 2 \times 7 - 2|P| = 8$, a contradiction. 

\ref{Li-MR1} If a $4$-face is incident with a $4_{2}$-vertex, then \autoref{fig:subfig:a} or \autoref{fig:subfig:b} will appear in $G$, a contradiction. 
\end{proof}

The next two structural results focus on the internal $4$-vertices. 
\begin{lemma}\label{FOUR1-MR1}
Let $w$ be a $4$-vertex with four neighbors $w_{1}, w_{2}, w_{3}, w_{4}$ in a cyclic order. If $w_{1}, w_{3}$ and $w$ are all internal vertices, then at most one of $w_{1}$ and $w_{3}$ is a $4$-vertex. 
\end{lemma}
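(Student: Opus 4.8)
The plan is to argue by contradiction and reduce to a smaller instance by vertex identification. Suppose, contrary to the statement, that $w_{1}$ and $w_{3}$ are both internal $4$-vertices. I would first record two geometric facts that come for free from \autoref{L1-MR1}\ref{Le-MR1}. First, $w_{1}w_{3} \notin E(G)$: otherwise $ww_{1}w_{3}$ is a triangle, and its two edges $ww_{1}, ww_{3}$ at $w$ separate $w_{2}$ from $w_{4}$ in the rotation around $w$, so one of $w_{2}, w_{4}$ lies inside $ww_{1}w_{3}$, making it a separating $3$-cycle. Second, $w_{1}$ and $w_{3}$ have no common neighbour other than $w$: a common neighbour $z \neq w$ would yield a $4$-cycle $ww_{1}zw_{3}$ that again separates $w_{2}$ from $w_{4}$, i.e.\ a separating $4$-cycle. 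Both contradict \autoref{L1-MR1}\ref{Le-MR1}.

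Next I would straighten the two edges at $w$ so that the identification is clean. Applying \autoref{ST} to the subgraph $H$ consisting of the path $w_{1}ww_{3}$ --- which is a forest, so the hypothesis on cycles is vacuous --- I may rename the lists of $w_{1}, w, w_{3}$ so that $ww_{1}$ and $ww_{3}$ become straight. Since these three vertices are internal, this renaming leaves $\mathscr{M}$, $G[S]$ and $\phi$ untouched. Now form $G'$ from $G$ by identifying $w_{1}$ and $w_{3}$ into a single vertex $u$. Because $w_{1}, w_{3}$ share only the neighbour $w$, no genuine parallel edges are created at $u$; and because $ww_{1}, ww_{3}$ are straight, the two edges from $w$ to the identified vertex collapse to a single straight edge $wu$. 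Thus $G'$ is a simple plane graph with $|V(G')| = |V(G)| - 1$, carrying the induced $4$-matching assignment $\mathscr{M}'$ (keeping $\mathscr{M}$ elsewhere and letting $u$ inherit the matchings of $w_{1}$ and $w_{3}$), and with $G'[S] = G[S]$ since $w_{1}, w_{3}$ are internal.

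The heart of the argument --- and the step I expect to be the main obstacle --- is to verify that $G'$ contains no subgraph isomorphic to the configurations in \autoref{F-MR1}. Any such configuration must pass through $u$, hence arises from a short closed structure through $w_{1}$ and $w_{3}$ in $G$. Here the hypotheses are used essentially: that $w_{1}$ and $w_{3}$ are $4$-vertices limits how many triangles and short cycles can meet at $u$, while their internality together with the absence of separating $3$- to $6$-cycles (\autoref{L1-MR1}\ref{Le-MR1}) forces each candidate configuration in $G'$ to pull back either to one of the forbidden configurations already present in $G$ or to a separating short cycle. A careful case analysis over the three configurations of \autoref{F-MR1} should rule all of them out; this bookkeeping is the only delicate part and is exactly where the degree count enters.

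Granting the verification, $G'$ satisfies all hypotheses of \autoref{MRA} and is strictly smaller than $G$, so by the minimality \eqref{EQ1-MR1} the $\mathscr{M}'$-coloring $\phi$ of $G'[S]$ extends to an $\mathscr{M}'$-coloring $\psi$ of $G'$. Finally I would pull $\psi$ back to $G$: set $\varphi(x) = \psi(x)$ for $x \notin \{w_{1}, w_{3}\}$ and $\varphi(w_{1}) = \varphi(w_{3}) = \psi(u)$. Every edge of $G$ other than $ww_{1}, ww_{3}$ inherits its matching unchanged from $G'$, so it is satisfied; and since $wu$ is straight and respected by $\psi$, the two straight edges $ww_{1}, ww_{3}$ are satisfied by $\varphi(w_{1}) = \varphi(w_{3}) = \psi(u) \neq \psi(w)$. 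Hence $\varphi$ is an $\mathscr{M}$-coloring of $G$ extending $\phi$, contradicting that $G$ is a counterexample. Therefore at most one of $w_{1}$ and $w_{3}$ is a $4$-vertex.
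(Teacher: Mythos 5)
Your reduction identifies the wrong pair of vertices, and this opens two genuine gaps that the paper's proof is specifically built to avoid. The paper deletes $w, w_{1}, w_{3}$ and identifies $w_{2}$ with $w_{4}$: after the deletion these two vertices lie on a common face, so the identification preserves planarity, and the key observation that $w_{2}$ and $w_{4}$ are at distance at least five in $G-\{w,w_{1},w_{3}\}$ (forced by the absence of separating $6^{-}$-cycles) guarantees that no new short cycle --- hence no loop, no multiple edge, and no configuration of \autoref{F-MR1} --- is created. You instead identify $w_{1}$ and $w_{3}$ while keeping $w$. These are \emph{opposite} neighbours of $w$, so they need not be cofacial, and your $G'$ need not be planar at all. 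Concretely, take $w$ with neighbours $w_{1},w_{2},w_{3},w_{4}$ in cyclic order and two further vertices $p,q$ with $p$ adjacent to $w_{1},w_{2},w_{4}$ and $q$ adjacent to $w_{2},w_{3},w_{4}$: this is a plane quadrangulation (all four inner faces are $4$-faces, so it is compatible with every constraint on a minimal counterexample), yet identifying $w_{1}$ with $w_{3}$ into $u$ produces $K_{3,3}$ with parts $\{w,p,q\}$ and $\{u,w_{2},w_{4}\}$. On such a $G'$ the minimality \eqref{EQ1-MR1} cannot be invoked, since \autoref{MRA} is a statement about plane graphs.

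The second gap is precisely the step you defer as bookkeeping: it is not merely delicate, it is false. A path $w_{1}xyw_{3}$ of length three in $G$ becomes a triangle $uxy$ in $G'$, and the $5$-cycle $ww_{1}xyw_{3}$ it closes with $w$ is only forced to be separating when neither $w_{2}$ nor $w_{4}$ lies on it; the case $x=w_{2}$ survives. For instance, a triangle $ww_{1}w_{2}$ sharing the edge $ww_{2}$ with a $4$-face $ww_{2}yw_{3}$ is perfectly legal in $G$ (it contains none of the configurations of \autoref{F-MR1} and creates no separating $6^{-}$-cycle), but after your identification it becomes $K_{4}$ minus an edge on $\{w,u,w_{2},y\}$, i.e.\ the two triangles $wuw_{2}$ and $uw_{2}y$ sharing the edge $uw_{2}$ --- exactly the configuration in \autoref{fig:subfig:a}. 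Your preliminary observations (that $w_{1}w_{3}\notin E(G)$ and that $w_{1},w_{3}$ share no neighbour besides $w$) and your final colour-transfer across the straightened edges are both correct, but the reduction feeding them is not a valid smaller instance. The paper's choice of identifying $w_{2}$ with $w_{4}$ is also what makes the last step work: once $w_{2}$ and $w_{4}$ carry the same colour along straight edges, the deleted vertices $w_{1},w_{3},w$ can each be coloured greedily because each sees at most three forbidden colours.
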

\begin{proof}
Suppose to the contrary that both $w_{1}$ and $w_{3}$ are $4$-vertices. By \autoref{ST}, we may assume that all the four edges incident with $w$ are straight. Let $\Gamma \coloneqq G - \{w, w_{1}, w_{3}\}$, let $G'$ be obtained from $\Gamma$ by identifying $w_{2}$ and $w_{4}$, and let $\mathscr{M}'$ be the restriction of $\mathscr{M}$ on $E(G')$. Let $P$ be a shortest path from $w_{2}$ to $w_{4}$ in $\Gamma$. If the length of $P$ is at most four, then $P \cup w_{2}ww_{4}$ is a separating cycle with length at most six, a contradiction. Therefore, the distance of $w_{2}$ and $w_{4}$ in $\Gamma$ is at least five. Note that the precolored cycle has length at most six and the length of $P$ is at least five in $\Gamma$, thus $G'[S] = G[S] = \mathcal{C}$ and $\phi$ is also an $\mathscr{M}'$-coloring of $G'[S]$. Moreover, $G'$ is a simple plane graph without subgraphs as in \autoref{F-MR1}. By the minimality, $\phi$ can be extended to an $\mathscr{M}'$-coloring $\phi'$ of $G'$. There are at most three forbidden colors for each of $w_{1}$ and $w_{3}$, so we can extend the coloring $\phi'$ to $w_{1}$ and $w_{3}$. Since $w_{2}$ and $w_{4}$ have the same color and $ww_{2}, ww_{4}$ are straight, we can further extend the coloring to $w$, a contradiction. 
\end{proof}

\begin{lemma}\label{FOUR2-MR1}
Let $w$ be an internal $4_{1}$-vertex with four neighbors $w_{1}, w_{2}, w_{3}, w_{4}$ in a cyclic order. If $ww_{1}$ is incident with a $3$-face, then $w_{1}$ cannot be an internal $4$-vertex. 
\end{lemma}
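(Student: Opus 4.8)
The plan is to argue by contradiction inside the minimal counterexample $G$, via a vertex-identification reduction in the spirit of \autoref{FOUR1-MR1}. Suppose $w_1$ is an internal $4$-vertex. Since $w$ is a $4_1$-vertex and $ww_1$ lies on a $3$-face, that face is bounded by $w$, $w_1$ and one of $w_2,w_4$; relabelling if necessary I may assume the $3$-face is $ww_1w_2$, so $w_1w_2\in E(G)$ and the other three corners at $w$ are non-triangular, giving $w_2w_3,w_3w_4,w_4w_1\notin E(G)$. Write the neighbours of $w_1$ in cyclic order as $w,w_2,s,t$, the triangle sitting in the corner $ww_2$ at $w_1$.

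First I would normalise the matching at $w$: the edges incident with $w$ form a star, which contains no cycle, so \autoref{ST} lets me rename colours so that all four edges $ww_1,ww_2,ww_3,ww_4$ are straight, after which each neighbour of $w$ forbids for $w$ exactly the colour it receives. Then I form the reduced graph: let $\Gamma=G-\{w,w_1\}$, let $G'$ be obtained from $\Gamma$ by identifying $w_2$ and $w_4$ into a single vertex $w^{*}$, and let $\mathscr{M}'$ be the restriction of $\mathscr{M}$. Deleting two vertices and identifying two more gives $|V(G')|=|V(G)|-3<|V(G)|$, so if $G'$ is a legitimate instance then minimality forces $\phi$ to extend to an $\mathscr{M}'$-coloring $\phi'$ of $G'$.

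Granting the extension, the pull-back is the easy half. I first colour $w_1$: its only coloured neighbours are $w_2,s,t$ (the fourth neighbour $w$ is still uncoloured), so at most three colours are forbidden and one remains. I then colour $w$: its neighbours are $w_1,w_2,w_3,w_4$, but $w_2$ and $w_4$ carry the common colour of $w^{*}$, so with the star at $w$ straight the set $\{\phi'(w_1),\phi'(w_2),\phi'(w_3),\phi'(w_4)\}$ has at most three elements and a colour is again free. This yields an $\mathscr{M}$-coloring of $G$ extending $\phi$, the desired contradiction.

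The substance, and the step I expect to be the main obstacle, is verifying that $G'$ is a legitimate smaller instance: that $w_2,w_4$ may be identified planarly (they share the face vacated by $w$), that $G'$ stays simple and leaves the precoloured cycle $\mathcal{C}$ unchanged, and above all that $G'$ contains none of the configurations in \autoref{F-MR1}. I would first note $w_2\not\sim w_4$, for an edge would make $ww_2w_4$ a separating triangle, excluded by \autoref{L1-MR1}\ref{Le-MR1}; the same bound kills a common neighbour and, more generally, any short $w_2$–$w_4$ path avoiding $w_3$, since closing it through $w$ produces a separating $6^{-}$-cycle (with $w_1$ on one side and $w_3$ on the other). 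The genuinely delicate case is a short $w_2$–$w_4$ path through $w_3$: there the closing cycle bounds only the two faces meeting along $ww_3$ and need not be separating, so length alone does not dismiss it. I would then analyse the two $4$-faces $ww_2\cdots w_3$ and $ww_3\cdots w_4$ forced in that case and use that the triangle $ww_1w_2$ is already adjacent to the first of them: any further triangle along their outer edges would exhibit a $4$-cycle adjacent to two triangles, i.e.\ a copy of a configuration in \autoref{F-MR1} already present in $G$. This simultaneously forbids those triangles and ensures that the new $4$-cycle through $w^{*}$ and $w_3$ in $G'$ is adjacent to at most one triangle, so $G'$ remains free of the forbidden configurations; a final check that no chord of $\mathcal{C}$ is created (and that $w_2,w_4$ cannot both lie on the short cycle $\mathcal{C}$) closes the reduction.
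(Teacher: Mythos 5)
Your proposal follows the paper's proof essentially verbatim: straighten the four edges at $w$ via \autoref{ST}, delete $w$ and $w_{1}$, identify $w_{2}$ with $w_{4}$, rule out short $w_{2}$--$w_{4}$ paths and chords of $\mathcal{C}$ via the absence of separating $6^{-}$-cycles, and then recolour $w_{1}$ followed by $w$ using the straightness of $ww_{2}$ and $ww_{4}$. The only divergence is in the delicate sub-case of a short $w_{2}$--$w_{4}$ path through $w_{3}$: the paper kills paths of length at most $3$ there by forcing $ww_{3}$ onto a $3$-face and a $4^{-}$-face (yielding a forbidden configuration), while you push the length-$4$ case into the check that $G'$ avoids \autoref{F-MR1} --- just be careful that what is forbidden is not every ``$4$-cycle adjacent to two triangles'' (that is the hypothesis of \autoref{LL}, which \autoref{MRA} weakens) but only the specific configurations of \autoref{F-MR1}, although in your situation the adjacent $4$-face along $ww_{3}$ does supply the extra structure needed to land in one of them.
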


\begin{proof}
Suppose to the contrary that $w_{1}$ is an internal $4$-vertex. By \autoref{ST}, we may assume that all the four edges incident with $w$ are straight. Let $\Gamma \coloneqq G - \{w, w_{1}\}$, let $G'$ be obtained from $\Gamma$ by identifying $w_{2}$ and $w_{4}$, and $\mathscr{M}'$ be the restriction of $\mathscr{M}$ on $E(G')$.  Suppose that $P$ is a path from $w_{2}$ to $w_{4}$ in $\Gamma$ with length at most three. Thus, $\mathcal{Q} = P \cup w_{2}ww_{4}$ is a cycle of length at most five. If $w_{3}$ is not on $P$, then $\mathcal{Q}$ is a separating $5^{-}$-cycle, a contradiction. Otherwise $w_{3}$ is on $P$, thus \autoref{L1-MR1}\ref{Le-MR1} implies that $ww_{3}$ is incident with a $3$-face and another $4^{-}$-face, but this contradicts the fact that the configurations in \autoref{fig:subfig:a} and \autoref{fig:subfig:b} are forbidden in $G$, a contradiction. Therefore, the distance of $w_{2}$ and $w_{4}$ in $\Gamma$ is at least four, and at least one of $w_{2}$ and $w_{4}$ is an internal vertex. 

\begin{figure}[htbp]%
\centering
\def\s{0.8}
\begin{tikzpicture}
\node [circle, draw] (c) at (0, 0) [minimum size = 3cm*\s]{};
\coordinate (A) at (0, 1.5*\s);
\coordinate (B) at (0, -1.5*\s);
\coordinate (w4) at (0, 0);
\coordinate (w) at (0, -0.75*\s);
\coordinate (w1) at (-0.75*\s, -0.75*\s);
\coordinate (w3) at (0.75*\s, -0.75*\s); 
\draw (A) node[above]{\small$u$}--(w4) node[right]{\small$w_{4}$}--(w) node[below right]{\small$w$}--(B) node[below]{\small$w_{2}$};
\draw (w1) node[above]{$w_{1}$}--(w)--(w3);
\fill 
(A) circle (2pt)
(B) circle (2pt)
(w) circle (2pt)
(w1) circle (2pt)
(w4) circle (2pt);
\node [left] at (-1.5*\s, 0) {\small $P_{1}$}; 
\node [right] at (1.5*\s, 0) {\small $P_{2}$}; 
\end{tikzpicture}
\caption{Local structure.}
\label{C-MR1}
\end{figure}
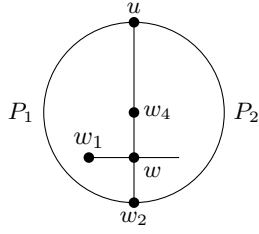

Suppose that the identification creates a chord $uw^{*}$ of $\mathcal{C}$, where $w^{*}$ is the new vertex. The outer cycle $\mathcal{C}$ is divided into two paths $P_{1}$ and $P_{2}$ by $u$ and $w_{2}$, see \autoref{C-MR1}. Recall that the distance of $w_{2}$ and $w_{4}$ is at least four in $G - \{w, w_{1}\}$, thus $P_{1}$ and $P_{2}$ have lengths at least three. Since $\mathcal{C}$ is a cycle of length at most six, $P_{1}$ and $P_{2}$ have lengths exactly three, which implies that $P_{1} \cup uw_{4}ww_{2}$ is a separating cycle of length six, a contradiction. Therefore, the identification does not create any chord of $\mathcal{C}$, and $\phi$ is an $\mathscr{M}'$-coloring of $G'[S]$. Since the distance of $w_{2}$ and $w_{4}$ in $\Gamma$ is at least four, $G'$ has no loops and no multiple edges. It is easy to check that there are no subgraphs isomorphic to the configurations in \autoref{F-MR1}. By the minimality, $\phi$ can be extended to an $\mathscr{M}'$-coloring $\phi'$ of $G'$. There are at most three forbidden colors for $w_{1}$, so we can extend the coloring $\phi'$ to $w_{1}$. Since $w_{2}$ and $w_{4}$ have the same color and $ww_{2}, ww_{4}$ are straight, we can further extend the coloring to $w$, a contradiction. 
\end{proof}

We give the initial charge $\mu(v) = 2\deg(v) - 6$ for any $v \in V(G)$, $\mu(D) = \deg(D) + 6$ for the outer face $D$, and $\mu(f) = \deg(f) - 6$ for any face $f$ other than $D$. By Euler's formula, the sum of the initial charges is zero. That is, 
\begin{equation*}\label{Euler2-MR1}
\sum_{v\,\in\,V(G)}\big(2\deg(v) - 6\big) + \sum_{f\,\in\,F(G) \setminus D}\big(\deg(f) - 6\big) + \big(\deg(D) + 6\big)= 0. 
\end{equation*}

We design discharging rules to redistribute the charges, preserving the sum, such that every element in $V(G) \cup F(G)$ has a nonnegative final charge $\mu'$. Moreover, there exists a face $f^{*}$ having a positive final charge, which leads to a contradiction. 

\begin{enumerate}[label = \bf R\arabic*., ref = R\arabic*]
\item\label{R1-MR1} Let $v$ be an internal $4$-vertex and $t$ be the number of incident $3$-faces. 
\begin{enumerate}[label = \bf \alph*., ref = \alph*]
\item\label{R1a-MR1} If $t = 2$, then $v$ sends $1$ to each incident $3$-face. 
\item\label{R1b-MR1} If $t = 1$ and it is incident with a $\mathcal{T}_{2}$-face $g$, then $v$ sends $1$ to the incident $\mathcal{T}_{2}$-face, sends $\frac{1}{2}$ to the incident face not adjacent to $g$, and sends $\frac{1}{4}$ to each incident $4^{+}$-face that shares an edge with $g$. 
\item\label{R1c-MR1} If $t = 1$ and it is not incident with a $\mathcal{T}_{2}$-face, then $v$ sends $\frac{1}{2}$ to each incident face. 
\item\label{R1d-MR1} If $t = 0$, then $v$ sends $\frac{1}{2}$ to each incident face. 
\end{enumerate}

\item\label{R2-MR1} Every internal $5^{+}$-vertex sends $\frac{5}{4}$ to each incident special face, sends $1$ to each of the other incident $3$-face, and sends $\frac{1}{2}$ to each incident $4^{+}$-face. 
\item\label{R3-MR1} Every vertex on the outer cycle $\mathcal{C}$ sends its initial charge to the outer face $D$, and $D$ sends $2$ to each special edge, and each special edge immediately sends $1$ to each incident face. 
\item\label{Source} Every internal $5^{+}$-vertex $z$ sends $\frac{1}{4}$ via incident 3-face $xyz$ to the sink. 
\end{enumerate}
\begin{remark}\label{Remark-1}
Note that each special edge receives $2$ from the outer face and sends $1$ to each incident face, thus each special edge has final charge zero and each face in $\mathcal{N}$ receives at least $2$ from the outer face via special edges. 
\end{remark}

Each vertex on $\mathcal{C}$ has final charge zero by \ref{R3-MR1}, thus it suffices to consider all the faces and internal vertices. By \autoref{L1-MR1}\ref{Lc-MR1}, every internal vertex has degree at least four. Since there is no adjacent $3$-faces, every $k$-vertex is incident with at most $\lfloor\frac{k}{2}\rfloor$ triangular-faces. In particular, every $4$-vertex is incident with at most two $3$-faces. 

Let $v$ be an internal $4$-vertex. If $v$ is an internal $4_{2}$-vertex, then $\mu'(v) = 2 - 2 \times 1 = 0$ by \ref{R1-MR1}\ref{R1a-MR1}. If $v$ is an internal $4_{1}$-vertex and it is incident with a $\mathcal{T}_{2}$-face, then $\mu'(v) \geq 2 - 1 - \frac{1}{2} - 2 \times \frac{1}{4} = 0$ by \ref{R1-MR1}\ref{R1b-MR1}. If $v$ is an internal $4_{1}$-vertex but it is not incident with any $\mathcal{T}_{2}$-face, then $\mu'(v) = 2 - 4 \times \frac{1}{2} = 0$ by \ref{R1-MR1}\ref{R1c-MR1}. If $v$ is an internal $4_{0}$-vertex, then $\mu'(v) = 2 - 4 \times \frac{1}{2} = 0$ by \ref{R1-MR1}\ref{R1d-MR1}. 

Let $v$ be an internal $5^{+}$-vertex. Every internal $5^{+}$-vertex sends at most $\frac{5}{4}$ to/via each incident $3$-face, and sends $\frac{1}{2}$ to each incident $4^{+}$-face by \ref{R2-MR1} and \ref{Source}. Thus,  
\begin{equation*}
\mu'(v) \geq 2\deg(v) - 6 - \left\lfloor \frac{\deg(v)}{2} \right\rfloor \times \frac{5}{4} - \left\lceil \frac{\deg(v)}{2} \right\rceil \times \frac{1}{2} \geq 0. 
\end{equation*}

Let $f$ be an inner $3$-face. It is observed that there is no $\mathcal{T}_{3}$-faces. If $f$ is a $\mathcal{T}_{2}$-face, then it receives $1$ from the incident internal vertex, and then $\mu'(f) = -3 + 1 + 2 = 0$ by \ref{R1-MR1}, \ref{R2-MR1} and \autoref{Remark-1}. If $f$ is a $\mathcal{T}_{1}$-face, then it receives at least $\frac{1}{2}$ from each incident internal vertex, and then $\mu'(f) \geq -3 + 2 \times \frac{1}{2} + 2 = 0$ by \ref{R1-MR1}, \ref{R2-MR1} and \autoref{Remark-1}. If $f$ is an internal $3$-face but not a special face, then $\mu'(f) = -3 + 3 \times 1 = 0$ by \ref{R1-MR1}\ref{R1a-MR1} and \ref{R2-MR1}. Assume that $f$ is a special face. By \autoref{FOUR2-MR1}, $f$ is a $(4, 5^{+}, 5^{+})$-face. Thus, $f$ receives $\frac{1}{2}$ from the incident $4$-vertex and $\frac{5}{4}$ from each incident $5^{+}$-vertex by \ref{R1-MR1}\ref{R1c-MR1} and \ref{R2-MR1}, which implies that $\mu'(f) = -3 + \frac{1}{2} + 2 \times \frac{5}{4} = 0$. 
 
Let $f$ be an inner $4^{+}$-face. If $f$ is a $4^{+}$-face in $\mathcal{N}$, then $\mu'(f) \geq -2 + 2 = 0$ by \autoref{Remark-1}. If $f$ is an internal $4$-face, then no vertex on $f$ can be a $4_{2}$-vertex by \autoref{L1-MR1}\ref{Li-MR1}, which implies that $\mu'(f) = -2 + 4 \times \frac{1}{2} = 0$. If $f$ is an internal $6^{+}$-face, then $\mu'(f) \geq \mu(f) =  \deg(f) - 6 \geq 0$. So we may assume that $f$ is an internal $5$-face in the remaining of this paragraph. If $f$ is incident with at least two $5^{+}$-vertices, then $\mu'(f) \geq -1 + 2 \times \frac{1}{2} = 0$ by \ref{R2-MR1}. So we may further assume that $f$ is a $(4, 4, 4, 4, 4^{+})$-face. If $f$ is adjacent to a $4^{+}$-face via $xy$, then neither $x$ nor $y$ is a $4_{2}$-vertex, and then each of $x$ and $y$ sends $\frac{1}{2}$ to $f$, which implies that $\mu'(f) \geq -1 + 2 \times \frac{1}{2} = 0$. Hence, $f$ is an internal $(4, 4, 4, 4, 4^{+})$-face adjacent to five 3-faces. By \autoref{L1-MR1}\ref{Lh-MR1}, at least four sources are internal vertices. By \autoref{FOUR1-MR1}, every internal source is a $5^{+}$-vertex, and it sends $\frac{1}{4}$ to the sink $f$ by \ref{Source}, which implies that $\mu'(f) \geq -1 + 4 \times \frac{1}{4} = 0$. 

By \ref{R3-MR1}, the final charge of the outer face $D$ is 
\begin{equation*}
\mu'(D) = \deg(D) + 6 + \sum_{v \in V(D)} \big(2\deg(v) - 6\big) - 2\left(\sum_{v \in V(D)} \big(\deg(v) - 2\big)\right) = 6 - \deg(D) \geq 0. 
\end{equation*}

Since $G$ is $2$-connected and $S \neq V(G)$, there are at least two special edges. Since there is no adjacent $3$-faces, there exists a $4^{+}$-face $f^{*}$ incident with a special edge. If $f^{*}$ is a $5^{+}$-face, then $\mu'(f^{*}) \geq -1 + 2 > 0$ by \autoref{Remark-1}. So we may assume that $f^{*}$ is a $4$-face incident with an internal vertex $v$. Since $f^{*}$ is in $\mathcal{N}$, it receives $2$ from $D$. If $v$ is a $5^{+}$-vertex, then it sends $\frac{1}{2}$ to $f^{*}$ by \ref{R2-MR1}; while $v$ is a $4$-vertex, it is a $4_{0}$-vertex or $4_{1}$-vertex, so it sends at least $\frac{1}{4}$ to $f^{*}$ by \ref{R1-MR1}. In these two cases, $\mu'(f^{*}) \geq  - 2 + 2 + \frac{1}{4} > 0$. Therefore, there must exist a $4^{+}$-face $f^{*}$ incident with a special edge such that its final charge is positive. This completes the proof. 
\end{proof}

\section{Proof of \autoref{MRB}}
\MRB*
\begin{proof}
Suppose that $G$ is a minimal counterexample to \autoref{MRB}. That is, there exists an $\mathscr{M}$-coloring of $G[S]$ that can not be extended to an $\mathscr{M}$-coloring of $G$ such that 
\begin{equation}\label{EQ1-MR2}
|V(G)| \text{ is minimized.}
\end{equation}
Subject to \eqref{EQ1-MR2}, 
\begin{equation}\label{EQ2-MR2}
|E(G)| - |S| \text{ is minimized.}
\end{equation}

The following properties are very similar to \autoref{L1-MR1}, so we omit the proofs here. 
\begin{lemma}\label{L1-MR2}
\text{}
\begin{enumerate}[label = (\alph*)]
\item\label{La-MR2} $S \neq V(G)$ and $G$ is connected; 
\item\label{Lb-MR2} $\mathcal{C} = G[S]$ is an induced cycle of $G$; 
\item\label{Lc-MR2} there is no separating $k$-cycle for $3 \leq k \leq 6$, so we may assume that $\mathcal{C}$ is the outer cycle; 
\item\label{Ld-MR2} each internal vertex has degree at least four.
\item\label{Le-MR2} every $4^{+}$-vertex is incident with at most two $3$-faces;
\item\label{Lf-MR2} there is no $4_{2}$-vertex incident with a $4$-face. 
\qed
\end{enumerate}
\end{lemma}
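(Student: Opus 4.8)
The plan is to exploit the two minimality conditions \eqref{EQ1-MR2} and \eqref{EQ2-MR2} exactly as in the proof of \autoref{L1-MR1}, treating each item by a local reduction that produces a strictly smaller (or edge-lighter) instance to which minimality applies, and reserving the two face-incidence items \ref{Le-MR2} and \ref{Lf-MR2} for a direct appeal to the forbidden configurations in \autoref{F-MR2}.

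For \ref{La-MR2}, if $S = V(G)$ then $\phi$ already $\mathscr{M}$-colors all of $G$, so there is nothing to extend; hence $S \neq V(G)$. If $G$ were disconnected, the component containing the precolored cycle $\mathcal{C}$ is a smaller instance and extends $\phi$ by \eqref{EQ1-MR2}, while every other component is a smaller forbidden-configuration-free plane graph and is DP-$4$-colorable (precolor a $6^{-}$-cycle it contains and extend by minimality, or color it greedily if it has none), so the partial colorings combine to extend $\phi$ to $G$. For \ref{Lb-MR2}, $G[S]$ already contains the $6^{-}$-cycle on $S$, so it suffices to forbid chords: if $e$ is a chord, I would delete it to get $G'$ with $|V(G')| = |V(G)|$ but $|E(G')| - |S| < |E(G)| - |S|$; by \eqref{EQ2-MR2} the coloring $\phi$ extends to $G'$, and since the two endpoints of $e$ keep their $\phi$-colors, which are compatible across $e$ because $\phi$ is an $\mathscr{M}$-coloring of $G[S]$, this extension already respects $e$ and hence colors $G$, a contradiction. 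For \ref{Ld-MR2}, an internal $3^{-}$-vertex $v$ lies outside $S$; deleting it yields a smaller valid instance, and after extending $\phi$ to $G - v$ the at most three matching-neighbors of $v$ forbid at most three colors from $L(v)$, so $v$ can be colored greedily.

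For \ref{Lc-MR2}, suppose $\mathcal{Q}$ is a separating $k$-cycle with $3 \le k \le 6$; since $\mathcal{C}$ is itself a cycle it lies entirely in one of the two regions determined by $\mathcal{Q}$, namely in $\Ext(\mathcal{Q})$. The subgraph $\Ext(\mathcal{Q})$ is a smaller instance carrying $\mathcal{C}$, so by \eqref{EQ1-MR2} $\phi$ extends to it and in particular colors the vertices of $\mathcal{Q}$; this coloring of $\mathcal{Q}$ is an $\mathscr{M}$-coloring of $G[V(\mathcal{Q})]$, and because $\mathcal{Q}$ is a $6^{-}$-cycle I may now regard $V(\mathcal{Q})$ as the precolored set and apply \eqref{EQ1-MR2} again to the strictly smaller subgraph $\Int(\mathcal{Q})$, extending into the interior. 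The two partial colorings agree on $\mathcal{Q}$ and together color all of $G$. Finally, \ref{Le-MR2} and \ref{Lf-MR2} are structural: any two triangles meeting only at a common vertex form the configuration \autoref{fig:subfig:2-a}, and three triangles incident to a $4^{+}$-vertex $v$ (whose degree is at least four) always contain two that meet only at $v$, so a $4^{+}$-vertex is incident with at most two $3$-faces; likewise a $4_{2}$-vertex incident with a $4$-face is precisely the configuration \autoref{fig:subfig:2-b}.

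The main obstacle will be the separating-cycle surgery in \ref{Lc-MR2}: I must ensure that the coloring read off $\mathcal{Q}$ from the exterior really is a legitimate precoloring for the interior instance, in particular that every edge of $G[V(\mathcal{Q})]$, including any chords drawn on one side, is respected by both pieces, and that each piece remains free of the configurations in \autoref{F-MR2}. If the matching on $\mathcal{Q}$ is awkward one can first invoke \autoref{ST} to straighten the edges of $\mathcal{Q}$ before performing the split. The remaining routine but necessary check, spread across all items, is that each reduction (component deletion, chord deletion, vertex deletion, or the cut along $\mathcal{Q}$) neither creates a forbidden subgraph nor destroys the property that the precolored set is the vertex set of a $6^{-}$-cycle.
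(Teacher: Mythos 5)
The paper itself omits this proof (deferring to \autoref{L1-MR1}, whose parts (a)--(f) are in turn deferred to \cite{MR3881665, Lu}), and your reconstruction follows exactly the standard reductions those sources use: minimality of $|V(G)|$ for component deletion, vertex deletion and the separating-cycle split; minimality of $|E(G)|-|S|$ for chord deletion; and the forbidden configurations of \autoref{F-MR2} for \ref{Le-MR2} and \ref{Lf-MR2}. Items \ref{La-MR2}, \ref{Lb-MR2}, \ref{Ld-MR2}, \ref{Le-MR2}, \ref{Lf-MR2} are essentially fine (two small touch-ups: for the components not containing $\mathcal{C}$, take a \emph{shortest} cycle so that it is induced and a precoloring of it exists, and note that a planar graph with no $6^{-}$-cycle has girth at least $7$ and is therefore $2$-degenerate; for \ref{Lf-MR2}, first observe that the two $3$-faces at a $4_{2}$-vertex must share an edge, since otherwise \autoref{fig:subfig:2-a} already appears, and only then does the incident $4$-face produce \autoref{fig:subfig:2-b}).

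The one step that is not justified as written is in \ref{Lc-MR2}: the assertion that ``since $\mathcal{C}$ is itself a cycle it lies entirely in one of the two regions determined by $\mathcal{Q}$'' is false in general for two cycles of a plane graph. The cycle $\mathcal{C}$ may pass through two or more vertices of $\mathcal{Q}$ and have vertices both in $\int(\mathcal{Q})$ and in $\ext(\mathcal{Q})$; in that case neither $\Int(\mathcal{Q})$ nor $\Ext(\mathcal{Q})$ contains $S$, and neither piece of your surgery is a valid smaller instance. The standard repair is to order the argument: first run your two-stage extension with $\mathcal{Q}=\mathcal{C}$ itself (both $\Int(\mathcal{C})$ and $\Ext(\mathcal{C})$ contain $S$ and have fewer vertices than $G$ if $\mathcal{C}$ is separating, so $\phi$ extends to each and the extensions agree on $S$), concluding that one side of $\mathcal{C}$ is empty; redraw the graph so that $\mathcal{C}$ is the outer cycle; and only then treat a general separating $6^{-}$-cycle $\mathcal{Q}$, for which no vertex of the outer cycle can lie in $\int(\mathcal{Q})$, so $S\subseteq V(\Ext(\mathcal{Q}))$ holds automatically. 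Your worry about chords of $\mathcal{Q}$ is harmless: $\Int(\mathcal{Q})$ and $\Ext(\mathcal{Q})$ are \emph{induced} subgraphs, so both contain all of $G[V(\mathcal{Q})]$, and the coloring inherited from the exterior extension is a legitimate precoloring of $\Int(\mathcal{Q})[V(\mathcal{Q})]$ without any need to invoke \autoref{ST}.
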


The following result and its proof are almost the same with \autoref{FOUR2-MR1}, so we only present the result but without giving the proof. Note that $w$ is incident with exactly one $3$-face in \autoref{FOUR2-MR2}. 

\begin{lemma}\label{FOUR2-MR2}
Let $w$ be an internal $4_{1}$-vertex with four neighbors $w_{1}, w_{2}, w_{3}, w_{4}$ in a cyclic order. If $ww_{1}$ is incident with a $3$-face, then $w_{1}$ cannot be an internal $4$-vertex. 
\end{lemma}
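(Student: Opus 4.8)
The plan is to imitate the proof of \autoref{FOUR2-MR1} essentially verbatim, replacing its structural inputs from \autoref{L1-MR1} and \autoref{F-MR1} by the \autoref{MRB} analogues \autoref{L1-MR2} and \autoref{F-MR2}. Suppose for contradiction that $w_{1}$ is an internal $4$-vertex. Since $w$ is a $4_{1}$-vertex and $ww_{1}$ lies on the unique incident $3$-face, I may assume by symmetry that this $3$-face is $ww_{1}w_{2}$. First I would apply \autoref{ST} to rename the lists so that all four edges incident with $w$ are straight in $\mathscr{M}$. Then set $\Gamma \coloneqq G - \{w, w_{1}\}$, let $G'$ be obtained from $\Gamma$ by identifying the two vertices $w_{2}$ and $w_{4}$ into a single new vertex $w^{*}$, and let $\mathscr{M}'$ be the restriction of $\mathscr{M}$ to $E(G')$. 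The target is to exhibit $G'$ as a strictly smaller instance of \autoref{MRB}, so that minimality of $G$ produces an $\mathscr{M}'$-coloring which pulls back to an $\mathscr{M}$-coloring of $G$ extending $\phi$.

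The crucial step is to show that the distance between $w_{2}$ and $w_{4}$ in $\Gamma$ is at least four. Suppose instead there is a path $P$ from $w_{2}$ to $w_{4}$ in $\Gamma$ of length at most three; then $\mathcal{Q} \coloneqq P \cup w_{2}ww_{4}$ is a cycle of length at most five. If $w_{3} \notin V(P)$, then $\mathcal{Q}$ is a separating $5^{-}$-cycle, contradicting \autoref{L1-MR2}\ref{Lc-MR2}. If $w_{3} \in V(P)$, then the chord $ww_{3}$ of $\mathcal{Q}$ splits it into two cycles whose lengths sum to at most seven, so at least one piece is a triangle; since no separating $3$-cycle exists by \autoref{L1-MR2}\ref{Lc-MR2}, that triangle bounds a $3$-face incident with $w$, giving $w$ a second incident $3$-face and contradicting that $w$ is a $4_{1}$-vertex (equivalently, reproducing one of the configurations of \autoref{F-MR2}). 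Hence $\mathrm{dist}_{\Gamma}(w_{2}, w_{4}) \ge 4$; in particular the identification creates neither a loop nor a multi-edge, and at least one of $w_{2}, w_{4}$ is internal, since two vertices of the $6^{-}$-cycle $\mathcal{C}$ are always within distance three along $\mathcal{C}$.

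It then remains to carry out the same bookkeeping as in \autoref{FOUR2-MR1}. I would first rule out that the identification creates a chord of the outer cycle $\mathcal{C}$: if it did, the new vertex $w^{*}$ would lie on $\mathcal{C}$ with a non-consecutive neighbour $u \in V(\mathcal{C})$, and $u$ together with the original endpoint would split $\mathcal{C}$ into two paths each of length at least three by the distance bound, forcing $|\mathcal{C}| = 6$ and a separating $6$-cycle through $u, w_{4}, w, w_{2}$, against \autoref{L1-MR2}\ref{Lc-MR2}. Consequently $\phi$ is still an $\mathscr{M}'$-coloring of $G'[S]$ and $G'$ is a simple plane graph; one then checks that $G'$ contains no subgraph isomorphic to \autoref{fig:subfig:2-a} or \autoref{fig:subfig:2-b}. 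By minimality $\phi$ extends to an $\mathscr{M}'$-coloring $\phi'$ of $G'$, which extends to $w_{1}$ because $\deg(w_{1}) = 4$ leaves at most three forbidden colors, and then to $w$ because $w_{2}$ and $w_{4}$ carry the common color of $w^{*}$ while $ww_{2}, ww_{4}$ are straight, so they block only one color at $w$. This contradicts the choice of $G$. I expect the genuine obstacle to be the verification that the identification introduces no new copy of \autoref{fig:subfig:2-a} or \autoref{fig:subfig:2-b} (two triangles meeting at the merged vertex $w^{*}$): this is exactly the point at which the \autoref{MRB} obstructions, rather than those of \autoref{MRA}, must be used, and the distance bound $\mathrm{dist}_{\Gamma}(w_{2}, w_{4}) \ge 4$ is what keeps it under control.
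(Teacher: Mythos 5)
Your proposal is correct and follows the paper's intended route: the paper omits the proof of this lemma, saying it is almost the same as that of \autoref{FOUR2-MR1}, and your argument is exactly that proof with the one genuinely necessary adaptation handled correctly --- in the case $w_{3} \in V(P)$ you extract a triangle bounding a second $3$-face at $w$ and contradict the $4_{1}$-hypothesis (precisely the point of the paper's remark that ``$w$ is incident with exactly one $3$-face''), instead of invoking the configurations of \autoref{F-MR1}, which are not forbidden under \autoref{MRB}. The only blemish is your parenthetical ``equivalently, reproducing one of the configurations of \autoref{F-MR2}'': that is not equivalent (the two triangles at $w$ would share an edge, which \autoref{MRB} permits), but it is dispensable since your primary contradiction with $w$ being a $4_{1}$-vertex already closes the case.
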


We give the initial charge $\mu(v) = 2\deg(v) - 6$ for any $v \in V(G)$, $\mu(D) = \deg(D) + 6$ for the outer face $D$, and $\mu(f) = \deg(f) - 6$ for any face $f$ other than $D$. By Euler's formula, the sum of the initial charges is zero. That is, 
\begin{equation*}\label{Euler2-MR2}
\sum_{v\,\in\,V(G)}\big(2\deg(v) - 6\big) + \sum_{f\,\in\,F(G) \setminus D}\big(\deg(f) - 6\big) + \big(\deg(D) + 6\big)= 0. 
\end{equation*}

Next, we give the discharging rules to redistribute the charges, preserving the sum, such that every element in $V(G) \cup F(G)$ has a nonnegative final charge $\mu'$. Moreover, there exists a face $f^{*}$ having a positive final charge, which leads to a contradiction. 

\begin{enumerate}[label = \bf R\arabic*., ref = R\arabic*]
\item\label{R1-MR2} Let $v$ be an internal $4$-vertex and $t$ be the number of incident $3$-faces. 
\begin{enumerate}[label = \bf \alph*., ref = \alph*]
\item\label{R1a-MR2} If $t = 2$, then $v$ sends $1$ to each incident $3$-face. 
\item\label{R1b-MR2} If $t \leq 1$, then $v$ sends $1$ to each incident $\mathcal{T}_{2}$-face, and sends $\frac{1}{2}$ to each incident internal $4$-face, internal $5$-face and each of the other incident $3$-face, and sends $\frac{1}{4}$ to each incident $4^{+}$-face in $\mathcal{N}$. 
\end{enumerate}
\item\label{R2-MR2} Every internal $5^{+}$-vertex sends $\frac{5}{4}$ to each incident $3$-face, and sends $\frac{1}{2}$ to each incident $4^{+}$-face. 
\item\label{R3-MR2} Each vertex on the outer cycle $\mathcal{C}$ sends its initial charge to the outer face $D$. The outer face $D$ sends $2$ to each special edge, and each special edge immediately sends $1$ to each incident face.  
\end{enumerate}
\begin{remark}\label{Remark-2}
Note that each special edge receives $2$ from the outer face and sends $1$ to each incident face, thus each special edge has final charge zero and each face in $\mathcal{N}$ receives at least $2$ from the outer face via special edges. 
\end{remark}

Each vertex on $\mathcal{C}$ has final charge zero by \ref{R3-MR2}, thus it suffices to consider all the faces and internal vertices. By \autoref{L1-MR2}\ref{Ld-MR2}, every internal vertex has degree at least four. By \autoref{L1-MR2}\ref{Le-MR2}, every $4^{+}$-vertex is incident with at most two $3$-faces. 

Let $v$ be an internal $4$-vertex. If $v$ is an internal $4_{2}$-vertex, then $\mu'(v) = 2 - 2 \times 1 = 0$ by \ref{R1-MR2}\ref{R1a-MR2}. If $v$ is an internal $4_{1}$-vertex and it is incident with a $\mathcal{T}_{2}$-face, then $\mu'(v) \geq 2 - 1 - \frac{1}{2} - 2 \times \frac{1}{4} = 0$ by \ref{R1-MR2}\ref{R1b-MR2}. If $v$ is an internal $4_{1}$-vertex but it is not incident with any $\mathcal{T}_{2}$-face, then $\mu'(v)\geq 2 - 4 \times \frac{1}{2} = 0$ by \ref{R1-MR2}\ref{R1b-MR2}. If $v$ is an internal $4_{0}$-vertex, then $\mu'(v) \geq 2 - 4 \times \frac{1}{2} = 0$ by \ref{R1-MR2}\ref{R1b-MR2}. 

Let $v$ be an internal $5^{+}$-vertex. By \ref{R2-MR2}, $v$ sends $\frac{5}{4}$ to each incident $3$-face, and sends $\frac{1}{2}$ to each incident $4^{+}$-face. Thus, $\mu'(v) \geq 2\deg(v) - 6 - 2 \times \frac{5}{4} - \big(\deg(v) - 2\big) \times \frac{1}{2} = \frac{3}{2}\big(\deg(v) - 5\big) \geq 0$. 

Let $f$ be an inner $3$-face. It is observed that there is no $\mathcal{T}_{3}$-faces. If $f$ is a $\mathcal{T}_{2}$-face, then it receives $1$ from the incident internal vertex, and then $\mu'(f) \geq -3 + 1 + 2 = 0$ by \ref{R1-MR2}, \ref{R2-MR2} and \autoref{Remark-2}. If $f$ is a $\mathcal{T}_{1}$-face, then it receives at least $\frac{1}{2}$ from each incident internal vertex, and then $\mu'(f) \geq -3 + 2 \times \frac{1}{2} + 2 = 0$ by \ref{R1-MR2}, \ref{R2-MR2} and \autoref{Remark-2}. If $f$ is an internal $3$-face but not a special face, then $\mu'(f) = -3 + 3 \times 1 = 0$ by \ref{R1-MR2}\ref{R1a-MR2} and \ref{R2-MR2}. Assume that $f$ is a special face. By \autoref{FOUR2-MR2}, $f$ is a $(4, 5^{+}, 5^{+})$-face. Thus, $f$ receives $\frac{1}{2}$ from the incident $4$-vertex and $\frac{5}{4}$ from each incident $5^{+}$-vertex by \ref{R1-MR2}\ref{R1b-MR2} and \ref{R2-MR2}, which implies that $\mu'(f) = -3 + \frac{1}{2} + 2 \times \frac{5}{4} = 0$. 

Let $f$ be an inner $4^{+}$-face. If $f$ is a $4^{+}$-face in $\mathcal{N}$, then $\mu'(f) \geq -2 + 2 = 0$ by \autoref{Remark-2}. If $f$ is an internal $4$-face, then no vertex on $f$ can be a $4_{2}$-vertex by \autoref{L1-MR2}\ref{Lf-MR2}, which implies that $\mu'(f) = -2 + 4 \times \frac{1}{2} = 0$. If $f$ is an internal $6^{+}$-face, then $\mu'(f) \geq \mu(f) = \deg(f) - 6 \geq 0$. Let $f$ be an internal $5$-face. Since there is no subgraph isomorphic to the configuration in \autoref{fig:subfig:2-a}, $f$ is adjacent to at most two $3$-faces. If $f$ is adjacent to a $3$-face $uvw$ with $uv$ on $f$, then at most one of $u$ and $v$ is a $4_{2}$-vertex. Hence, $f$ is incident with at most two $4_{2}$-vertices. By the discharging rules, if $x$ is a vertex on $f$ but it is not a $4_{2}$-vertex, then it sends $\frac{1}{2}$ to $f$. This implies that $\mu'(f) \geq - 1 + 3 \times \frac{1}{2} > 0$.

By \ref{R3-MR2}, the final charge of the outer face $D$ is 
\begin{equation*}
\mu'(D) = \deg(D) + 6 + \sum_{v \in V(D)} \big(2\deg(v) - 6\big) - 2\left(\sum_{v \in V(D)} \big(\deg(v) - 2\big)\right) = 6 - \deg(D) \geq 0. 
\end{equation*}

\begin{lemma}\label{4+face}
There is a $4^{+}$-face incident with a special edge. 
\end{lemma}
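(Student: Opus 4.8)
The plan is to argue by contradiction: suppose that \emph{every} special edge is incident with two $3$-faces, and deduce that $G$ must be a wheel, which is impossible. First I would note that a special edge exists at all. Since $G$ is connected by \autoref{L1-MR2}\ref{La-MR2} and $S = V(\mathcal{C}) \neq V(G)$, some edge joins a vertex of $\mathcal{C}$ to an internal vertex; as $\mathcal{C}$ is induced (\autoref{L1-MR2}\ref{Lb-MR2}), this edge is not a chord, hence it is special. So at least one vertex of $\mathcal{C}$ carries a special edge, i.e. has degree at least $3$.

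The key step is to bound the number of special edges at a single cycle vertex. Fix $x \in V(\mathcal{C})$ with cycle-neighbours $x^-, x^+$, and list its special edges $xv_1, \dots, xv_k$ in the planar order between $xx^-$ and $xx^+$. Each face $f_j$ in the angular region at $x$ between two consecutive of these edges is incident with a special edge, hence by hypothesis is a triangle; so $f_0 = xx^-v_1$, $f_j = xv_jv_{j+1}$ for $1 \le j \le k-1$, and $f_k = xv_kx^+$, and all the ``third'' edges are present. If $k \ge 2$, then $f_0 = xx^-v_1$ and $f_2$ (which is $xv_2x^+$ when $k=2$ and $xv_2v_3$ when $k \ge 3$) are two triangles sharing only the vertex $x$, because $x^-, x^+$ are distinct cycle vertices while $v_1, v_2, v_3$ are distinct internal vertices. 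This is exactly the forbidden configuration of \autoref{fig:subfig:2-a}. Hence every vertex of $\mathcal{C}$ carries at most one special edge, i.e. has degree at most $3$.

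Next I would propagate this around the cycle. Take a cycle vertex $x$ of degree $3$ with its unique special edge $xv$; the two faces at $x$ are the triangles $xx^-v$ and $xvx^+$, so $x^-v, x^+v \in E(G)$. Then $v$ is the unique special neighbour of $x^-$, forcing $\deg(x^-) = 3$, and the same triangle analysis at $x^-$ yields $x^{--}v \in E(G)$. Repeating around $\mathcal{C}$ shows that every cycle vertex is adjacent to the single internal vertex $v$ and has degree $3$; since the triangles $vx_ix_{i+1}$ then fill all angular regions at $v$ and tile the disc bounded by $\mathcal{C}$, the graph $G$ is exactly the wheel with hub $v$ and rim $\mathcal{C}$. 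If $|\mathcal{C}| \ge 4$, this wheel contains the two spoke-triangles $vx_1x_2$ and $vx_3x_4$, which share only $v$ — again the configuration of \autoref{fig:subfig:2-a}. If $|\mathcal{C}| = 3$, then $G = K_4$ and the single internal vertex has only three coloured neighbours, so any $\mathscr{M}$-coloring $\phi$ of $G[S]$ extends to $v$, contradicting that $G$ is a counterexample. Either way we reach a contradiction, so some special edge is incident with a $4^{+}$-face.

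I expect the main obstacle to be the bookkeeping that legitimises each invocation of \autoref{fig:subfig:2-a}: I must check in both the fan step and the wheel step that the two triangles produced really meet in exactly one vertex and have five distinct vertices overall. A secondary point is that \autoref{L1-MR2}\ref{La-MR2} only guarantees that $G$ is connected, not $2$-connected; however, the contradiction hypothesis already rules out the awkward cases, since a special edge incident with two triangular faces cannot be a bridge, and this is precisely what guarantees that the face boundaries used above are genuine $3$-cycles and that the required ``third'' edges $x^-v_1$, $v_jv_{j+1}$, $x^+v$ actually exist.
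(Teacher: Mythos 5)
Your argument is correct, but it takes a genuinely different and noticeably longer route than the paper's. The paper's proof is purely local: it fixes one cycle vertex $u$ carrying a special edge $uv$, with cycle\-/neighbours $x$ and $y$; if some inner face at $u$ is a $4^{+}$-face it is done (every inner face at a cycle vertex of degree at least $3$ touches a special edge), and otherwise the bowtie of \autoref{fig:subfig:2-a} forces $\deg(u)=3$ with two triangular faces $uvx$ and $uvy$, after which the second face on the edge $vx$ must be a $4^{+}$-face, since another triangle $vxz$ would form a bowtie with $uvy$ at $v$ (here the paper needs $x$ and $y$ nonadjacent to rule out $z=y$). You instead negate the conclusion globally, show every cycle vertex carries at most one special edge, and propagate the forced triangulation around $\mathcal{C}$ until $G$ collapses to a wheel, which you then kill with the same bowtie, handling $|\mathcal{C}|=3$ separately by extending the colouring to the hub (citing \autoref{L1-MR2}\ref{Ld-MR2} would do the same job). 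Both proofs hinge on exactly the same obstruction; yours pays for the global sweep with the wheel/tiling bookkeeping and the $K_4$ base case, while the paper stops after inspecting two consecutive cycle vertices. One possible trim: once every $x_i$ is shown adjacent to $v$, the spoke triangles $vx_1x_2$ and $vx_3x_4$ already exist as subgraphs whenever $|\mathcal{C}|\geq 4$, so the full ``$G$ is exactly the wheel'' claim is only needed when $|\mathcal{C}|=3$. Your closing remarks on bridges and on the distinctness of the five vertices in each bowtie are exactly the right points to check, and they do go through.
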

\begin{proof}
Since $G$ is connected and $S \neq V(G)$, there is at least one special edge. Suppose that $xuy$ is a path on the outer cycle $\mathcal{C}$ and it is incident with a special edge $uv$. If every inner face incident with $u$ is a $3$-face, then $u$ must has degree three and it is incident with two $3$-faces, say $uvx$ and $uvy$, for otherwise there is a subgraph isomorphic to the configuration in \autoref{fig:subfig:2-a}. Since every $3$-cycle bounds a $3$-face and every internal vertex has degree at least four, $x$ and $y$ are nonadjacent. This implies that $vx$ is incident with a $4^{+}$-face, for otherwise there is a subgraph isomorphic to the configuration in \autoref{fig:subfig:2-a}. 
\end{proof}

By \autoref{4+face}, there is a $4^{+}$-face $f^{*}$ incident with a special edge $uv$ and $v$ is an internal vertex. If $f^{*}$ is a $5^{+}$-face, then $\mu'(f^{*}) \geq -1 + 2 > 0$ by \autoref{Remark-2}. So we may assume that $f^{*}$ is a $4$-face. By \autoref{L1-MR2}\ref{Lf-MR2}, there is no $4_{2}$-vertex incident with $4$-face $f^{*}$. If $v$ is a $4$-vertex, then it is incident with at most one $3$-face, and sends at least $\frac{1}{4}$ to $f^{*}$ by \ref{R1-MR2}\ref{R1b-MR2}, thus $\mu'(f^{*}) \geq -2 + 2 + \frac{1}{4} > 0$. If $v$ is a $5^{+}$-vertex, then $\mu'(f^{*}) \geq -2 + 2 + \frac{1}{2} > 0$ by \ref{R2-MR2} and \autoref{Remark-2}. This completes the proof. 
\end{proof}

\section{Proof of \autoref{MRC}}
\MRC*
\begin{proof}
Suppose that $G$ is a minimal counterexample to \autoref{MRC}. That is, there exists an $\mathscr{M}$-coloring of $G[S]$ that can not be extended to an $\mathscr{M}$-coloring of $G$ such that 
\begin{equation}\label{EQ1-MR3}
|V(G)| \text{ is minimized.}
\end{equation}
Subject to \eqref{EQ1-MR3}, 
\begin{equation}\label{EQ2-MR3}
|E(G)| - |S| \text{ is minimized.}
\end{equation}

Similarly, we only give the following structural results but without giving the proofs. 
\begin{lemma}\label{L1-MR3}
\text{}
\begin{enumerate}[label = (\alph*)]
\item\label{La-MR3} $S \neq V(G)$ and $G$ is connected; 
\item\label{Lb-MR3} $\mathcal{C} = G[S]$ is an induced cycle of $G$; 
\item\label{Lc-MR3} there is no separating $k$-cycle for $3 \leq k \leq 7$, so we may assume that $\mathcal{C}$ is the outer cycle of $G$; 
\item\label{Ld-MR3} each internal vertex has degree at least four.
\item\label{Le-MR3} if $u$ and $v$ on $\mathcal{C}$ are nonadjacent, then they have no common neighbor not on $\mathcal{C}$;
\item\label{Lf-MR3} every $4$-face is incident with four $4^{+}$-face. 
\qed
\end{enumerate}
\end{lemma}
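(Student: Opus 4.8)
The plan is to establish \autoref{L1-MR3}\ref{La-MR3}--\ref{Lf-MR3} by contradiction against the minimality conditions \eqref{EQ1-MR3}--\eqref{EQ2-MR3}, following the template of \autoref{L1-MR1}: each reduction either deletes a small piece of $G$ or cuts $G$ along a short cycle, applies minimality to the resulting smaller instance(s), and then finishes the colouring greedily or by a second application of the theorem to an interior piece. For \ref{La-MR3}, if $S = V(G)$ then $\phi$ already $\mathscr{M}$-colours $G$, a contradiction, so $S \neq V(G)$. For connectedness, the cycle $\mathcal{C} = G[S]$ lies in one component $G_{1}$, to which $\phi$ extends by \eqref{EQ1-MR3}; any other component $G_{2}$ is a strictly smaller plane graph avoiding the configurations of \autoref{NOA}, and I colour it by precolouring a shortest (hence chordless) $7^{-}$-cycle of $G_{2}$ and extending by minimality when $G_{2}$ has girth at most $7$ (a cycle is $\mathscr{M}$-colourable whenever four colours are available), and by noting that $G_{2}$ is $2$-degenerate, hence DP-$3$-colourable, when its girth is at least $8$. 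Gluing the component colourings contradicts that $G$ is a counterexample. For \ref{Lb-MR3}, a chord $xy$ of $G[S]$ can be removed: $G - xy$ has the same precoloured cycle, smaller $|E| - |S|$, and no new forbidden subgraph, and since $\phi$ already respects $xy$, the extension to $G - xy$ granted by \eqref{EQ2-MR3} is automatically an extension to $G$, a contradiction, so $\mathcal{C}$ is induced.

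For \ref{Lc-MR3}, I take a shortest separating cycle $\mathcal{O}$ with $3 \leq |\mathcal{O}| \leq 7$, should one exist; minimality of its length forces $\mathcal{O}$ to be chordless. Extending $\phi$ to $\Ext(\mathcal{O})$, which contains $\mathcal{C}$ and is strictly smaller, by \eqref{EQ1-MR3} colours $\mathcal{O}$; because $\mathcal{O}$ is a chordless $7^{-}$-cycle, this colouring is a legitimate precolouring of $\Int(\mathcal{O})$, to which it extends by a second application of the theorem, and gluing along $\mathcal{O}$ colours $G$, a contradiction. With no separating $7^{-}$-cycle and $|\mathcal{C}| \leq 7$, the cycle $\mathcal{C}$ is non-separating, so all remaining vertices lie on one side of it and I may re-embed with $\mathcal{C}$ bounding the outer face. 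For \ref{Ld-MR3}, deleting an internal vertex $v$ of degree at most $3$ and invoking \eqref{EQ1-MR3} colours $G - v$; the at most three coloured neighbours of $v$ block at most three colours through their matchings, leaving a colour in $L(v) \supseteq [4]$ for $v$, a contradiction.

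Part \ref{Le-MR3} combines the last two items: a common internal neighbour $z$ of nonadjacent $u, v \in \mathcal{C}$ yields, via the two $u$--$v$ subpaths of $\mathcal{C}$ together with the path $uzv$, two cycles of length at most $|\mathcal{C}| \leq 7$; by \autoref{L1-MR3}\ref{Lc-MR3} neither separates, so (their exteriors being nonempty) both bounded regions are empty, whence $\deg(z) = 2$, contradicting \autoref{L1-MR3}\ref{Ld-MR3}. Part \ref{Lf-MR3} is essentially immediate, since a $4$-face sharing an edge with a $3$-face realises precisely the first configuration of \autoref{NOA} (a triangle glued to a $4$-cycle along an edge), its five vertices being distinct by simplicity and \autoref{L1-MR3}\ref{Lc-MR3}; forbidding that configuration therefore forces every neighbour of a $4$-face to be a $4^{+}$-face. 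I expect the genuine obstacle to sit in \ref{La-MR3}: because the precoloured set of \autoref{MRC} must be a full cycle rather than possibly a single vertex (as is allowed in \autoref{MRA}), a precolouring-free component cannot be reduced through one precoloured vertex, so one must argue separately that every such component either carries a precolourable short cycle or is sparse enough to colour outright; the remaining parts are routine deletion and cut-along-a-short-cycle reductions.
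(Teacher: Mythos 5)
Your overall strategy coincides with the paper's intended proof, which is omitted there as being ``almost the same'' as \autoref{L1-MR1}: greedy deletion of a $3^{-}$-vertex for \ref{Ld-MR3}, chord deletion using the secondary minimality \eqref{EQ2-MR3} for \ref{Lb-MR3} (correctly observing that $\phi$ already respects the chord, so the extension in $G-xy$ is one in $G$), cut-and-paste along a shortest separating $7^{-}$-cycle for \ref{Lc-MR3}, and the two-arcs argument reducing \ref{Le-MR3} to \ref{Lc-MR3} and \ref{Ld-MR3}, exactly as in the printed proof of \autoref{L1-MR1}\ref{Lg-MR1}. Your handling of connectedness in \ref{La-MR3} is the one place where \autoref{MRC} genuinely differs from \autoref{MRA} (no single-vertex precoloring is available), and your dichotomy is sound: a shortest cycle of a component $G_{2}$ is induced, an induced $7^{-}$-cycle is DP-$4$-colorable (indeed $\chi_{\mathrm{DP}}(C_{n})=3$), and minimality applies to $G_{2}$ since $|V(G_{2})|<|V(G)|$; while a planar graph of girth at least $8$ satisfies $|E|\leq\frac{4}{3}(|V|-2)$ by Euler's formula, is therefore $2$-degenerate, hence DP-$3$-colorable.

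The one genuine gap is in \ref{Lf-MR3}. You assert that the five vertices of a $4$-face $f=v_{1}v_{2}v_{3}v_{4}$ and an adjacent $3$-face $t=v_{1}v_{2}x$ are distinct ``by simplicity and \ref{Lc-MR3}'', but the coincidence $x=v_{3}$ (or symmetrically $x=v_{4}$) creates no separating $7^{-}$-cycle, so \ref{Lc-MR3} does not exclude it: in the plane graph consisting of the $4$-cycle $v_{1}v_{2}v_{3}v_{4}$ bounding a face together with the edge $v_{1}v_{3}$ drawn on the other side of the path $v_{1}v_{2}v_{3}$, the faces $f$ and $t=v_{1}v_{2}v_{3}$ share the two edges $v_{1}v_{2}$ and $v_{2}v_{3}$, and no short separating cycle arises. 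What actually kills this case is a rotation argument: if $f$ and $t$ share both edges incident with $v_{2}$, they occupy both angular sectors at $v_{2}$, forcing $\deg(v_{2})=2$; then \ref{Ld-MR3} puts $v_{2}$ on $\mathcal{C}$, so both edges $v_{1}v_{2}, v_{2}v_{3}$ lie on $\mathcal{C}$ and one of $f,t$ is the outer face $D$; $f=D$ would make $v_{1}v_{3}$ a chord of $\mathcal{C}$, contradicting \ref{Lb-MR3}, so $t=D$ and $\deg(D)=3$. Thus \ref{Lf-MR3} as you prove it has a residual case in which a $4$-face is adjacent to the outer $3$-face when $\mathcal{C}$ is a triangle; this is harmless for the paper, since \ref{Lf-MR3} is invoked only for internal $4$-faces (whose neighboring faces are inner faces, and where the coincidence would force the degree-$2$ vertex to be internal, contradicting \ref{Ld-MR3} outright) and for $4$-faces in $\mathcal{N}$ inside \autoref{4Plus}, at which point $\deg(D)\in\{6,7\}$ has already been assumed. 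Once the five vertices are distinct, your conclusion is exactly right: the union of the $4$-cycle and the triangle is precisely the forbidden subgraph of \autoref{fig:subfig:a-}. You should replace the appeal to \ref{Lc-MR3} by the degree/rotation argument above (or restrict the statement to the faces for which it is used); everything else in your write-up is correct and matches the paper's route.
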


We give the initial charge $\mu(v) = 2\deg(v) - 6$ for any $v \in V(G)$, $\mu(f) = \deg(f) - 6$ for any inner face $f \in F(G)$, and $\mu(D) = \deg(D) + 6$ for the outer face $D$. By Euler's formula, the sum of the initial charges is zero. That is, 
\begin{equation*}\label{Euler2-MR3}
\sum_{v\,\in\,V(G)}\big(2\deg(v) - 6\big) + \sum_{f\,\in\,F(G) \setminus D}\big(\deg(f) - 6\big) + \big(\deg(D) + 6\big)= 0. 
\end{equation*}
Next, we give the discharging rules to redistribute the charges, preserving the sum, such that every element in $V(G) \cup F(G)$ other than $D$ has a nonnegative charge $\mu'$ after applying \ref{R1-}, \ref{R2-} and \ref{R3-}. Moreover, the outer face $D$ has a positive final charge $\mu^{*}$ after applying \ref{TOD}, which leads to a contradiction. 

\begin{enumerate}[label = \bf R\arabic*., ref = R\arabic*]
\item\label{R1-} Let $v$ be an internal $4$-vertex and $t$ be the number of incident $3$-faces. 
\begin{enumerate}[label = \bf \alph*., ref = \alph*]
\item\label{R1a-} If $t = 2$, then $v$ sends $1$ to each incident $3$-face. 
\item\label{R1b-} If $t = 1$ and it is incident with a $3$-face $g$, then $v$ sends $1$ to the $3$-face $g$, and sends $\frac{1}{2}$ to the incident face not adjacent to $g$, and sends $\frac{1}{4}$ to each incident $4^{+}$-face that shares an edge with $g$. 
\item\label{R1c-} If $t = 0$, then $v$ sends $\frac{1}{2}$ to each incident face. 
\end{enumerate}
\item\label{R2-} Every internal $5^{+}$-vertex sends $1$ to each incident $3$-face, and sends $\frac{1}{2}$ to each incident $4^{+}$-face. 
\item\label{R3-} Each vertex on the outer cycle $\mathcal{C}$ sends its initial charge to the outer face. The outer face sends $2$ to each special edge, and each special edge immediately sends $1$ to each incident face. 
\item\label{TOD} Every inner face transfers its surplus charge to $D$. 
\end{enumerate}
\begin{remark}\label{Remark-3}
Note that each special edge receives $2$ from the outer face and sends $1$ to each incident face, thus each special edge has final charge zero and each face in $\mathcal{N}$ receives at least $2$ from the outer face via special edges. 
\end{remark}

Each vertex on $\mathcal{C}$ has final charge zero by \ref{R3-}, thus it suffices to consider all the faces and internal vertices. By \autoref{L1-MR3}\ref{Ld-MR3}, every internal vertex has degree at least four. 

By the absence of \autoref{fig:subfig:a-}, every $k$-vertex is incident with at most $\lfloor\frac{2k}{3}\rfloor$ triangular-faces. In particular, every $4$-vertex is incident with at most two $3$-faces. If $v$ is an internal $4_{2}$-vertex, then $\mu'(v) = 2 - 2 \times 1 = 0$ by \ref{R1-}\ref{R1a-}. If $v$ is an internal $4_{1}$-vertex, then $\mu'(v) \geq 2 - 1 - \frac{1}{2}  - 2 \times \frac{1}{4} = 0$ by \ref{R1-}\ref{R1b-}. If $v$ is an internal $4_{0}$-vertex, then $\mu'(v) = 2 - 4 \times \frac{1}{2} = 0$ by \ref{R1-}\ref{R1c-}. 

Let $v$ be an internal $5^{+}$-vertex. By \ref{R2-}, $v$ sends $1$ to each incident $3$-face, sends $\frac{1}{2}$ to each incident $4^{+}$-face. Thus, 
\begin{equation*}
\mu'(v) \geq 2\deg(v) - 6 - \left\lfloor \frac{2\deg(v)}{3} \right\rfloor \times 1 - \left\lceil \frac{\deg(v)}{3} \right\rceil \times \frac{1}{2} \geq 0. 
\end{equation*}

Let $f$ be a face in $\mathcal{N}$. If $f$ is a $3$-face in $\mathcal{N}$, then it receives $2$ from $D$ and receives $1$ from each incident internal vertex, so $\mu'(f) \geq -3 + 2 + 1 = 0$. If $f$ is a $4$-face in $\mathcal{N}$, then $\mu'(f) \geq -2 + 2 = 0$ by \autoref{Remark-3}. If $f$ is a $5^{+}$-face in $\mathcal{N}$, then $\mu'(f) \geq -1 + 2 = 1 > 0$ by \autoref{Remark-3}. 

If $f$ is an internal $3$-face, then it receives $1$ from each incident vertex, and then $\mu'(f) = -3 + 3 \times 1 = 0$. If $f$ is an internal $4$-face, then it is adjacent to four $4^{+}$-faces, and it receives $\frac{1}{2}$ from each incident vertex, which implies that $\mu'(f) = -2 + 4 \times \frac{1}{2} = 0$. If $f$ is an internal $6^{+}$-face, then $\mu'(f) \geq \mu(f) = \deg(f) - 6 \geq 0$. 

Let $f = v_{1}v_{2}v_{3}v_{4}v_{5}$ be an internal $5$-face. Since \autoref{fig:subfig:b-} is forbidden in $G$, $f$ is adjacent to at most two $3$-faces. By symmetry, we may assume that neither $v_{1}v_{5}$ nor $v_{4}v_{5}$ is adjacent to a $3$-face. If $f$ is adjacent to a $3$-face $uvw$ with $uv$ on $f$, then at most one of $u$ and $v$ is a $4_{2}$-vertex, for otherwise \autoref{fig:subfig:a-} or \autoref{fig:subfig:b-} will appear in $G$. Hence, at most two of $\{v_{1}, v_{2}, v_{3}, v_{4}\}$ are $4_{2}$-vertices. By the discharging rules, if $x \in \{v_{1}, v_{2}, v_{3}, v_{4}\}$ but it is not a $4_{2}$-vertex, then it sends at least $\frac{1}{4}$ to $f$. Moreover, whenever $v_{5}$ is a $4_{0}$-, $4_{1}$- or $5^{+}$-vertex, it sends $\frac{1}{2}$ to $f$. This implies that $\mu'(f) \geq -1 + \frac{1}{2} + 2 \times \frac{1}{4} = 0$ by \ref{R1-}\ref{R1b-}, \ref{R1-}\ref{R1c-} and \ref{R2-}. 

Finally, we show that $D$ has a positive final charge $\mu^{*}$. By \ref{R3-} and \ref{TOD}, the final charge of the outer face $D$ is 
\begin{equation*}
\mu^{*}(D) = \deg(D) + 6 + \sum_{v \in V(D)} \big(2\deg(v) - 6\big) - 2\left(\sum_{v \in V(D)} \big(\deg(v) - 2\big)\right) + p = 6 - \deg(D) + p, 
\end{equation*}
where $p$ is the total charge sent to $D$ by \ref{TOD}. 
So we may assume that $D$ is a $6$- or $7$-face, otherwise we are done. 

\begin{lemma}\label{4Plus}
Every $4^{+}$-face in $\mathcal{N}$ sends at least $1$ to $D$ by \ref{TOD}. 
\end{lemma}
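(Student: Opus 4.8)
Since \ref{TOD} moves the entire surplus $\mu'(f)$ of an inner face to $D$, the statement amounts to proving $\mu'(f) \ge 1$ for every $4^{+}$-face $f \in \mathcal{N}$ after \ref{R1-}--\ref{R3-}. The plan is to dispose of $5^{+}$-faces immediately and then treat $4$-faces by a short structural analysis. For a $5^{+}$-face $f$, its initial charge is $\deg(f) - 6 \ge -1$ and, by \autoref{Remark-3}, it collects at least $2$ from $D$ across its special edges, so $\mu'(f) \ge -1 + 2 = 1$ and we are done with this case.

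For a $4$-face $f = v_{1}v_{2}v_{3}v_{4} \in \mathcal{N}$ I would first pin down how $f$ meets $\mathcal{C}$. Using that $\mathcal{C}$ is induced (\autoref{L1-MR3}\ref{Lb-MR3}) together with \autoref{L1-MR3}\ref{Le-MR3} and the absence of separating short cycles (\autoref{L1-MR3}\ref{Lc-MR3}), I would show that $f$ has at most two vertices on $\mathcal{C}$ and that, when it has two, they are consecutive on $\mathcal{C}$: otherwise those two boundary vertices would be non-adjacent on $\mathcal{C}$ yet share the two remaining, internal, vertices of $f$ as common neighbours, contradicting \autoref{L1-MR3}\ref{Le-MR3}. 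In both surviving configurations --- one boundary vertex, or two consecutive ones --- exactly two edges of $f$ are special, so $f$ receives $2$ from $D$ by \ref{R3-}, and $f$ has at least two internal vertices (three in the one-vertex case, two in the two-vertex case).

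The heart of the argument is to show that each internal vertex $v_{i}$ on $f$ sends $\frac{1}{2}$ to $f$, so that the at least two internal vertices together contribute at least $1$ and $\mu'(f) \ge -2 + 2 + 1 = 1$. The key input is \autoref{L1-MR3}\ref{Lf-MR3}: $f$ is adjacent to four $4^{+}$-faces, hence no $3$-face shares an edge with $f$. A $5^{+}$-vertex sends $\frac{1}{2}$ by \ref{R2-}. If $v_{i}$ has degree $4$, the two faces flanking $f$ at $v_{i}$ are $4^{+}$-faces, which rules out $v_{i}$ being a $4_{2}$-vertex and forces the unique triangle at a $4_{1}$-vertex to be the face opposite $f$; thus $v_{i}$ sends $\frac{1}{2}$ to $f$ by \ref{R1-}\ref{R1c-} (if $v_{i}$ is a $4_{0}$-vertex) or by the clause of \ref{R1-}\ref{R1b-} sending $\frac{1}{2}$ to the incident face not adjacent to its triangle (if $v_{i}$ is a $4_{1}$-vertex).

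The main obstacle is precisely this last step. The naive bound only credits $\frac{1}{4}$ to $f$ from an incident $4_{1}$-vertex, which would leave $\mu'(f)$ at $\frac{1}{2}$ or $\frac{3}{4}$ and fail; the whole lemma turns on using \autoref{L1-MR3}\ref{Lf-MR3} to force the flanking faces to be large, thereby excluding $4_{2}$-vertices on $f$ and pushing any $4_{1}$-triangle to the opposite side, so that $f$ actually gets the full $\frac{1}{2}$. A secondary care point is the boundary analysis of the second paragraph, where excluding the ``diagonal'' placement of two boundary vertices needs both the induced-cycle property and the ban on separating $3$-cycles.
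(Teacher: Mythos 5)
Your proposal is correct and follows essentially the same route as the paper: dispose of $5^{+}$-faces via \autoref{Remark-3}, then show a $4$-face in $\mathcal{N}$ has at least two internal vertices (via \autoref{L1-MR3}\ref{Le-MR3}) each sending $\frac{1}{2}$ because \autoref{L1-MR3}\ref{Lf-MR3} forces the flanking faces to be $4^{+}$-faces. You merely spell out in more detail the step the paper asserts in one line, namely why an internal $4$-vertex on $f$ cannot be a $4_{2}$-vertex and why a $4_{1}$-vertex's triangle must sit opposite $f$, so that \ref{R1-}\ref{R1b-} or \ref{R1-}\ref{R1c-} yields the full $\frac{1}{2}$.
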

\begin{proof}
After applying \ref{R1-}, \ref{R2-} and \ref{R3-}, every $5^{+}$-face in $\mathcal{N}$ has charge at least $-1 + 2 = 1$, thus it sends at least $1$ to $D$ by \ref{TOD}. Suppose that $f$ is a $4$-face in $\mathcal{N}$. Since $\mathcal{C}$ is an induced cycle and $S \neq V(G)$, there is at least one internal vertex incident with $f$. Moreover, there are at least two internal vertices incident with $f$, for otherwise it contradicts \autoref{L1-MR3}\ref{Le-MR3}. Since $f$ is a $4$-face, it is adjacent to four $4^{+}$-faces. Therefore, each internal vertex on $f$ sends $\frac{1}{2}$ to $f$, which implies that $f$ sends at least $4 - 6 + 2 + 2 \times \frac{1}{2} = 1$ to $D$ by \ref{TOD}. 
\end{proof}

If there is a $6^{+}$-face $g$ in $\mathcal{N}$, then $g$ sends at least $2$ to $D$ by \ref{TOD}, which implies that $\mu^{*}(D) \geq 6 - \deg(D) + 2 > 0$. So every face in $\mathcal{N}$ is a $5^{-}$-face. By \autoref{4Plus}, if there are at least two $4^{+}$-faces in $\mathcal{N}$, then $\mu^{*}(D) \geq 6 - \deg(D) + 2 \times 1 > 0$. So we may further assume that there is at most one $4^{+}$-face in $\mathcal{N}$. It follows that $D$ is adjacent to a $3$-face $f_{1}=uvw$, where $w$ is an internal vertex. Suppose that each of $wu$ and $wv$ is adjacent to exactly one $3$-face. Thus, each of $wu$ and $wv$ is adjacent to a $5$-face in $\mathcal{N}$, this contradicts that there is at most one $4^{+}$-face in $\mathcal{N}$. Suppose that $wu$ is adjacent to another $3$-face $wuv'$. If $v'$ is on $\mathcal{C}$, then each of $wv$ and $wv'$ is adjacent to a $5$-face in $\mathcal{N}$, but this is impossible. If $v'$ is an internal vertex, then each of $wv$ and $uv'$ is adjacent to a $5$-face in $\mathcal{N}$, but this is also impossible. This completes the proof. 
\end{proof}

\vskip 0mm \vspace{0.3cm} \noindent{\bf Acknowledgments.} This work was supported by the Fundamental Research Funds for Universities in Henan (YQPY20140051).


\begin{thebibliography}{10}

\bibitem{MR2586624}
O.~V. Borodin and A.~O. Ivanova, Planar graphs without triangular 4-cycles are
  4-choosable, Sib. \`Elektron. Mat. Izv. 5 (2008) 75--79.

\bibitem{MR3996735}
L.~Chen, R.~Liu, G.~Yu, R.~Zhao and X.~Zhou, {DP}-4-colorability of two classes
  of planar graphs, Discrete Math. 342~(11) (2019) 2984--2993.

\bibitem{MR3758240}
Z.~Dvo\v{r}\'{a}k and L.~Postle, Correspondence coloring and its application to
  list-coloring planar graphs without cycles of lengths 4 to 8, J. Combin.
  Theory Ser. B 129 (2018) 38--54.

\bibitem{MR2538645}
B.~Farzad, Planar graphs without 7-cycles are 4-choosable, SIAM J. Discrete
  Math. 23~(3) (2009) 1179--1199.

\bibitem{MR3648207}
D.~Hu and J.-L. Wu, Planar graphs without intersecting 5-cycles are
  4-choosable, Discrete Math. 340~(8) (2017) 1788--1792.

\bibitem{MR3802151}
S.-J. Kim and K.~Ozeki, A sufficient condition for {DP}-4-colorability,
  Discrete Math. 341~(7) (2018) 1983--1986.

\bibitem{MR1687318}
P.~C.~B. Lam, B.~Xu and J.~Liu, The 4-choosability of plane graphs without
  4-cycles, J. Combin. Theory Ser. B 76~(1) (1999) 117--126.

\bibitem{Li2019}
R.~Li and T.~Wang, Variable degeneracy on toroidal graphs, arXiv:1907.07141,
  \url{https://arxiv.org/abs/1907.07141}.

\bibitem{Lv2019}
X.~Li, J.~Lv and M.~Zhang, Every planar graph without intersecting 5-cycles is
  {DP}-4-colorable, submitted for publication.

\bibitem{Zhang2019}
X.~Li and M.~Zhang, Every planar graph without 5-cycles adjacent to 6-cycles is
  {DP}-4-colorable, submitted for publication.

\bibitem{MR3881665}
R.~Liu and X.~Li, Every planar graph without 4-cycles adjacent to two triangles
  is {DP}-4-colorable, Discrete Math. 342~(3) (2019) 623--627.

\bibitem{MR3983123}
R.~Liu and X.~Li, Every planar graph without adjacent cycles of length at most
  8 is 3-choosable, European J. Combin. 82 (2019) 102995.

\bibitem{MR3969021}
R.~Liu, S.~Loeb, M.~Rolek, Y.~Yin and G.~Yu, D{P}-3-coloring of planar graphs
  without 4, 9-cycles and cycles of two lengths from $\{6,7,8\}$, Graphs
  Combin. 35~(3) (2019) 695--705.

\bibitem{MR3886261}
R.~Liu, S.~Loeb, Y.~Yin and G.~Yu, {DP}-3-coloring of some planar graphs,
  Discrete Math. 342~(1) (2019) 178--189.

\bibitem{Lu}
F.~Lu, M.~Rao, Q.~Wang and T.~Wang, Planar graphs without normally adjacent
  short cycles, arXiv:1908.04902v3,   \url{https://arxiv.org/abs/1908.04902v3}.

\bibitem{MR2292964}
X.~Luo, The 4-choosability of toroidal graphs without intersecting triangles,
  Inform. Process. Lett. 102~(2-3) (2007) 85--91.

\bibitem{MR1386951}
M.~Mirzakhani, A small non-{$4$}-choosable planar graph, Bull. Inst. Combin.
  Appl. 17 (1996) 15--18.

\bibitem{MR4230514}
M.~Rao and T.~Wang, D{P}-3-coloring of planar graphs without certain cycles,
  Discrete Appl. Math. 297 (2021) 35--45.

\bibitem{MR1431668}
M.~Voigt and B.~Wirth, On {$3$}-colorable non-{$4$}-choosable planar graphs, J.
  Graph Theory 24~(3) (1997) 233--235.

\bibitem{MR1852816}
W.~Wang and K.-W. Lih, The 4-choosability of planar graphs without 6-cycles,
  Australas. J. Combin. 24 (2001) 157--164.

\bibitem{MR1935837}
W.~Wang and K.-W. Lih, Choosability and edge choosability of planar graphs
  without intersecting triangles, SIAM J. Discrete Math. 15~(4) (2002)
  538--545.

\bibitem{MR3638001}
R.~Xu and J.-L. Wu, A sufficient condition for a planar graph to be
  4-choosable, Discrete Appl. Math. 224 (2017) 120--122.

\bibitem{MR3954054}
Y.~Yin and G.~Yu, Planar graphs without cycles of lengths 4 and 5 and close
  triangles are {DP}-3-colorable, Discrete Math. 342~(8) (2019) 2333--2341.

\end{thebibliography}
\end{document}